\newtheorem{theorem}{Theorem}[section]
\newtheorem{definition}[theorem]{Definition}
\newtheorem{lemma}[theorem]{Lemma}
\def\setR{\ensuremath{\mathbb{R}}}
\def\setC{\ensuremath{\mathbb{C}}}
\newcommand{\delbar}{{\ensuremath{\bar\partial }}}
\def\del{{\partial}}
\def\id{\mathrm{Id}}
\def\to{\longrightarrow}
\def\M{\mathcal{M}}
\def\H{\ensuremath{\mathcal{H}}} 
\def\Lie{\mathcal{L}}  
\newcommand{\norm}[1]{{\ensuremath{|\!|#1|\!|}}}
\title{Embedded $\H$--Holomorphic Maps and\\ Open Book Decompositions}
\author{Jens von Bergmann}
\begin{document}
\maketitle {\abstract{We investigate nicely embedded $\H$--holomorphic
    maps into stable Hamiltonian three--manifolds. In particular we
    prove that such maps locally foliate and satisfy a
    no--first--intersection property. Using the compactness results of
    \cite{h-hol_compactness} we show that connected components of the
    space of such maps can be compactified if they contain a global
    surface of section. As an application we prove that any contact
    structure on a 3--manifold admits and $\H$--holomorphic open book
    decomposition.  This work is motivated by the program laid out by
    Abbas, Cieliebak and Hofer in \cite{planar_weinstein} to give a
    proof to the Weinstein conjecture using holomorphic curves.  The
    results in this paper, with the exception of the compactness
    statement, have been independently obtained by C. Abbas
    \cite{abbas_solutions}.}  }

\section{Introduction}
\label{sec:introduction}
Let $(Z,\alpha,\omega,J)$ be a stable Hamiltonian structure on an
oriented closed $3$--manifold $Z$, i.e. $\alpha$ is a 1--form and
$\omega$ is a closed 2--form such that $\alpha\wedge\omega>0$ and
$\ker\omega\subset\ker d\alpha$. This induces a splitting $TZ=L\oplus
F$ where $L=\ker(\omega)$ is called the characteristic foliation and
$F=\ker(\alpha)$ is the almost contact plane field. $L$ has a
distinguished section $R$ defined by $\alpha(R)=1$ called the
characteristic vector field, and $J\in End(F,\omega)$ is a choice of
$\omega$--compatible almost complex structure on $F$. We extend $J$ to
all of $TZ$ by precomposing with the projection
$\pi_F=\id-R\otimes\alpha$ along $R$ onto $F$. For more details we
refer to \cite{h-hol_compactness}.

\begin{definition}[$\H$--Holomorphic Maps]\label{def:H_hol} 
  Let $(\dot\Sigma,j)$ be a punctured Riemann surface. A map $v:\dot
  \Sigma\to Z$ is called {\em $\H$--holomorphic} if
  \begin{eqnarray} 
    \delbar^F_J v&=&0,\qquad\delbar_J^F=\frac{1}{2}\left(\pi_F\,dv+J\,\pi_F\,dv\,j\right)
    \label{eq:H_F}\\
    d(v^\ast\alpha\circ j)&=&0,\label{eq:H_L}\\
    \int_{\del B_p(\varepsilon)}v^\ast\alpha\circ j&=&0\qquad 
    \forall\;p\in\Sigma\setminus\dot\Sigma\quad\mathrm{and}\ \varepsilon\
    \mathrm{small\ enough}.\label{eq:H_per}
  \end{eqnarray}
\end{definition}

$\H$--holomorphic maps were introduced by Abbas, Cieliebak and Hofer
\cite{planar_weinstein} with the purpose to produce $\H$--holomorphic
open book decompositions and use these to prove the Weinstein
Conjecture in three dimensions, which has subsequently been proved by
Taubes \cite{taubes_weinstein}. The purpose of this work is to prove
some of the steps in the direction indicated in
\cite{planar_weinstein}. In Theorem \ref{thm:open_book} we show that
indeed every contact structure is supported by an $\H$--holomorphic
open book decomposition. We moreover prove that nicely embedded
$\H$--holomorphic maps locally foliate stable Hamiltonian
three--manifolds (Theorem \ref{thm:local_foliation}), and that
connected components of the space of nicely embedded $\H$--holomorphic
maps can be compactified if one of the maps is a global surface of
section (Theorem \ref{thm:moduli_space}).

Compactness of the space of $\H$--holomorphic maps from a genus $g$
surface $\Sigma$ is a delicate issue. $\H$--holomorphic maps can be
viewed as a parameter version of $J$--holomorphic maps with
$2g$--dimensional parameter space given by $H^1(\Sigma;\setR)$. The
parameter space is not compact, causing connected components of the
space of $\H$--holomorphic maps to be in general not compact. However,
under certain topological and geometric assumptions, the space of
$\H$--holomorphic maps does have a natural compactification. These
results were established in \cite{h-hol_compactness}, where a
general criterion for compactness of $\H$--holomorphic maps was given
that we will make use of and restate here for the convenience of the reader.
\begin{theorem}[\cite{h-hol_compactness}]\label{thm:compactness}
  Let $(Z,\alpha,\omega,J)$ be a stable Hamiltonian structure so that
  all closed characteristics are Morse or Morse--Bott. The space of
  smooth $\H$-holomorphic maps into $Z$ with uniformly bounded
  $\omega$ and $\alpha$--energies with uniformly bounded periods 
  has compact closure in the space of neck--nodal $\H$--holomorphic
  maps.
\end{theorem}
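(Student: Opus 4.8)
The plan is to adapt the SFT compactness theorem for pseudoholomorphic curves to the present setting. Let $v_n\colon(\dot\Sigma_n,j_n)\to Z$ be $\H$--holomorphic with $\int v_n^\ast\omega$, the $\alpha$--energies, and the periods of the closed $1$--form $\lambda_n:=v_n^\ast\alpha\circ j$ all uniformly bounded. Writing $\lambda_n=dh_n$ locally, a direct computation from (\ref{eq:H_F})--(\ref{eq:H_L}) shows that $\tilde v_n:=(h_n,v_n)$ is $\tilde J$--holomorphic into $\setR\times Z$ for the $\setR$--invariant almost complex structure $\tilde J$ determined by $\tilde J\,\del_h=R$ and $\tilde J|_F=J$; thus an $\H$--holomorphic map is exactly such a curve whose $\setR$--coordinate $h$ is multivalued, with monodromy recorded by the periods of $\lambda_n$. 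In this guise elliptic regularity gives $C^\infty_{\loc}$ bounds for $v_n$ on any region where $|dv_n|$ stays bounded.

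Next comes the standard rescaling dichotomy: on a fixed region either $|dv_n|$ is uniformly bounded on compact subsets, or after rescaling at a concentration point one extracts a nonconstant finite--energy $\tilde J$--holomorphic plane or sphere. Sphere bubbles each carry a definite amount of $\omega$--energy, so only finitely many occur. Plane bubbles, and more generally the behaviour of $v_n$ near the punctures of $\dot\Sigma_n$, are governed by asymptotic analysis of Hofer--Wysocki--Zehnder type: since the period around a puncture vanishes by (\ref{eq:H_per}) and all closed characteristics are Morse or Morse--Bott, each end converges exponentially to a closed characteristic, with a well--defined asymptotic orbit (in the Morse--Bott case, to a point in a fixed Morse--Bott family).

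With local compactness and bubbling under control, I would carry out the soft--rescaling/neck analysis: around embedded loops in $\dot\Sigma_n$ that enclose only a small amount of $\omega$--energy but around which $v_n$ develops a long thin neck, the map is $C^0$--close to a long cylinder over a closed characteristic. Cutting the domains along these loops produces, in the limit, a nodal configuration of $\H$--holomorphic pieces joined by necks that are cylinders over closed characteristics. The uniformly bounded periods guarantee that the cohomological parameter $[\lambda_n]\in H^1(\dot\Sigma_n;\setR)$ stays in a compact set and distributes consistently over the limiting pieces, so that one may check that the equations of Definition~\ref{def:H_hol} pass to the limit on each component, that the asymptotic orbits match across the necks, and that the periods are additive across the nodes; the limit is then a neck--nodal $\H$--holomorphic map, which gives the asserted compactness of the closure.

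The main obstacle, and the genuinely new point compared with ordinary SFT compactness, is the bookkeeping of the cohomological parameter. An $\H$--holomorphic map is a parametrized version of a pseudoholomorphic curve over the noncompact base $H^1(\Sigma;\setR)$, its $\setR$--coordinate $h$ is multivalued with monodromy the periods of $\lambda_n$, and the constraint on these periods is global rather than local. One must show that the energy bounds together with the bounded periods control the full cohomology class of $\lambda_n$ (equivalently its harmonic representative in a fixed gauge), that no period leaks onto a node as the curves degenerate, and that the limiting neck--nodal object again satisfies the closedness and period constraints of Definition~\ref{def:H_hol} on every component and at every puncture and node. Without the Morse/Morse--Bott hypothesis the asymptotic orbits need not be isolated and one is forced into a systematic Morse--Bott analysis, which complicates the description of the limit but not the overall scheme.
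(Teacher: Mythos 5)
The paper under review does not prove this theorem; it is quoted from the companion paper \cite{h-hol_compactness}, so there is no in--paper argument against which to check yours. With that caveat, your outline is a plausible reconstruction of the intended strategy, and it correctly isolates the genuinely new difficulty relative to ordinary SFT compactness: an $\H$--holomorphic map only lifts \emph{locally} to a $\tilde J$--holomorphic curve in $\setR\times Z$ because the primitive of $v^\ast\alpha\circ j$ is multivalued, and the obstruction to globalizing is governed by a cohomology class living in the noncompact space $H^1(\Sigma;\setR)$. That diagnosis matches the framing given in the introduction for why a bounded--period hypothesis must be imposed at all.

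Two places in your sketch would need to be made precise, and they are exactly where the cited theorem departs from the familiar template. First, ``the periods of $\lambda_n$ uniformly bounded'' is not quite the hypothesis of the theorem: the quantity $P_{[\gamma]}(v)$ is a supremum of integrals of $v^\ast\alpha$ along the closed leaves of $1$--cylinder Strebel differentials in each free homotopy class of simple closed curve, a formulation chosen so that the bound remains meaningful as the conformal structures $j_n$ degenerate and simple loops get pinched into necks; a bound on periods over a fixed basis of $H^1(\dot\Sigma;\setR)$ does not obviously survive such degenerations, nor is it the same as a bound on the harmonic part of $v^\ast\alpha$. Second, the limit is a \emph{neck--nodal} $\H$--holomorphic map rather than an ordinary nodal or broken configuration: as Lemma \ref{lem:embedded_periods} later in this paper makes explicit, even when $|dv_n|$ stays bounded the maps can develop an unbounded ``twist'' along the Reeb direction inside a forming neck, and the limiting object must retain residual data on the neck. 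Your assertion that ``the periods are additive across the nodes'' elides exactly this accounting; one must show that the Strebel--differential period bound prevents the twist from escaping to infinity and then say precisely what the neck remembers. That bookkeeping is the substance of \cite{h-hol_compactness}, and your sketch names the problem but does not resolve it.
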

Bounded periods means that for each free homotopy class of simple
closed loops there exists a constant $C>0$ so that for each
$\H$--holomorphic map $v:(\dot\Sigma,j)\to Z$ in the family
\begin{eqnarray*}
    P_{[\gamma]}(v)=\sup_{\phi\in\Phi(j,[\gamma])}\sup_{s\in(0,1)}
      \left|\int_{\sigma_s(\phi)}v^\ast\alpha\right|<C,
\end{eqnarray*}
where $\Phi(j,[\gamma])$ is the set of 1--cylinder
Strebel--differentials associated to $j$ and the free homotopy class
of a simple closed loop $\gamma$ and $\sigma_s$ denote the closed
leaves of $\phi$.  The period integrals are essentially controlled by
the harmonic part of $v^\ast\alpha$.  For more details see
\cite{h-hol_compactness}.

An $\H$--holomorphic map $v:\dot\Sigma\to Z$ is locally
$J$--holomorphic, so $v$ has the same local properties as a
$J$--holomorphic map into $Z$. In particular, in local conformal
coordinates $C=[0,\infty)\times S^1$ near each puncture $p$ the map
$v$ is asymptotic to a closed characteristic $x:S^1\to Z$ in $Z$ and
has transverse approach governed by an eigenvector $e$ of the
asymptotic operator (see Equation (\ref{eq:asymptotic_operator})) with
eigenvalue $\lambda<0$. We say that the transverse asymptotic approach
is {\em maximal} if $\lambda$ is the largest negative eigenvalue of
the asymptotic operator, and $-\lambda<2\pi$. For simplicity we will
assume throughout that the transverse approach of the maps in question
is maximal at all punctures.  Throughout we work with weighted Sobolev
spaces with weights $\delta>0$ chosen small enough so that
$\delta<-\lambda$ at all punctures. The importance of the assumption
that $-\lambda<2\pi$ is important for the interplay between the
``tangent'' and ``normal'' operators in the asymptotic analysis and
will become clear in Section \ref{sec:linearized-equation}. For more
details on the asymptotic operator and the choice of Sobolev spaces we
refer to \cite{dragnev}.

In this work we will focus on nicely embedded $\H$--holomorphic maps
into stable Hamiltonian three--manifolds.
\begin{definition}
  An immersed $\H$--holomorphic map $v:\dot\Sigma\to Z$ is called {\em
    nicely immersed} if the transverse approach at all punctures is
  maximal and the Fredholm index of $v$ is 1.

  An embedded nicely immersed map is called {\em nicely embedded} if
  the map extends to an embedding over the radial compactification;
  in particular all asymptotic orbits are covered once.
\end{definition}
This definition is roughly modeled on the definition of
nicely embedded $J$--holomorphic spheres given in
\cite{wendl_transversality}. 

It follows from standard index formulas (see
e.g. \cite{wendl_transversality}, remembering that the index for
$\H$--holomorphic maps is $2g-1$ higher than the index of a
$J$--holomorphic map into the symplectization) it means for an
immersed $\H$--holomorphic map $v$ to be nicely immersed is that the
all asymptotic orbits have odd Conley--Zehnder index. In other words,
if $\lambda$ is the eigenvalue of the eigenvector of the asymptotic
operator governing the transverse approach at any of the punctures,
then the other eigenvector with the same winding has eigenvalue
$\hat\lambda\le\lambda$.

The argument in this paper can be roughly split into three parts. The
first part considers the local Fredholm theory yielding transversality
and a strong implicit function theorem and thus showing that nicely
embedded maps locally foliate. The second part proves a weak version
of global intersection theory recently developed in more generality by
R. Siefring in \cite{siefring_intersection} yielding a
no--first-intersections result. We combines these results with the
compactness result from Theorem \ref{thm:compactness} to prove that
the space of nicely embedded maps can be compactified. The third part
is an application showing that every contact structure is supported by
an $\H$--holomorphic open book decomposition. 

With the exception of the compactness results, these results have been
independently established by C. Abbas \cite{abbas_solutions}.

\section{The Linearized Equation}
\label{sec:linearized-equation}

Suppose $v:\dot\Sigma\to Z$ is a smooth embedded $\H$--holomorphic
map. We want to linearize the equations at $v$. We will work with the
(generalized) Tanaka--Webster connection $\tilde\nabla$ and we quickly
recall the properties of $\tilde \nabla$ that we will make use of.
For details we refer to \cite{tanno}. Let $X$ and $Y$ be sections of
$F$. Then
\begin{eqnarray*}
  \tilde\nabla \alpha=0,\quad\tilde\nabla R=0,\quad\tilde\nabla g=0\\
  (\tilde\nabla_RJ)=0,\quad (\tilde\nabla_XJ)R=-X-\phi X,\quad
%  (\tilde\nabla_XJ)Y=(\nabla_X J)Y+\omega(X+\phi X,JY)\otimes R
\\
  \tilde T(R,R)=0,\quad\tilde T(R,JX)=-J\tilde T(R,X)=-\phi X,\quad
%  \tilde T(X,Y)=2d\alpha(X,Y)\otimes R
\end{eqnarray*}
where $\phi=\frac12\Lie_RJ$.

Given $\xi\in\Omega^0(\dot\Sigma,v^\ast TZ)$ we write
$\xi=\zeta\, R+\chi$ with $\chi$ a section of $v^\ast F$ and set
\begin{eqnarray*}
  \Phi_v:v^\ast TZ\to \exp_v(\xi)^\ast TZ
\end{eqnarray*}
denote parallel transport along the geodesic $s\mapsto
\exp_{v(z)}(s\xi(z))$ with respect to $\tilde\nabla$.

The space of diffeomorphisms of $\dot\Sigma$ with $N$ (fixed)
punctures acts on the space of maps in the usual way.  Locally at a
map $(v,j)$ consider we fix a $6g-6+2N$--dimensional slice
$T_j\mathcal T$ of infinitesimal variations of complex structure
$h$. We will assume that the slice is chosen so that the variations of
complex structure have support away from the punctures.

Then define
\begin{eqnarray*}
  \mathcal{F}_v^F&:&
  \Omega^0(\dot\Sigma,v^\ast TZ)\times T_j\mathcal{T}\to 
  \Omega^{0,1}(\dot\Sigma,v^\ast TZ)\\
  \mathcal{F}_v^F(\xi,h)&=&\Phi_v(\xi)^{-1}
  \frac12\left(\pi_F\,d\exp_v(\xi)+J\pi_Fd\exp_v(\xi)\circ j_h\right)
\end{eqnarray*}
and
\begin{eqnarray*}
  \mathcal{F}_v^L
  &:&\Omega^0(\dot\Sigma,v^\ast TZ)\times T_j\mathcal{T}\to 
  \Omega^{2}(\dot\Sigma)\\
  \mathcal{F}_v^L(\xi,h)&=&d(\Phi_v(\xi)^\ast\alpha\circ j_h)
\end{eqnarray*}
where $j_h$ is a variation of complex structure satisfying
$\frac{d}{dt}j_{th}|_{t=0}=h$. Such a family of variation can be chosen
canonically, see e.g. Section 3.2 of \cite{wendl_transversality}.

\begin{lemma}\label{lem:linearization}
  For any smooth map $v:\dot\Sigma\to M$ asymptotic to closed
  characteristics at the punctures, define the operators
  \begin{eqnarray*}
    D^F_v(\xi,h)=d\mathcal{F}^F_v(0)(\xi,h),\qquad
    D^L_v(\xi,h)=d\mathcal{F}^L_v(0)(\xi,h).
  \end{eqnarray*}
  Then
  \begin{eqnarray}
    D^F_v(\xi,h)&=&\tilde\nabla^{0,1}\chi
    +\frac12(\tilde\nabla_\chi J)\pi_F\,dv\circ j
    +\frac12J\pi_F\,dv\circ h
    +(J\phi\chi\otimes v^\ast\alpha)^{0,1}
    -\zeta J\phi\pi_F\,dv\label{eq:lin_F}\\
    D^L_v(\xi,h)&=&d\left[d\zeta\circ j +v^\ast(\iota_\chi d\alpha)\circ j
      +v^\ast\alpha\circ h\right].\label{eq:lin_L}
  \end{eqnarray}
\end{lemma}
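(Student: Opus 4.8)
The plan is to compute the two linearizations directly by differentiating $\mathcal{F}^F_v$ and $\mathcal{F}^L_v$ at $\xi=0$, using the defining properties of the Tanaka--Webster connection $\tilde\nabla$ quoted above to organize the resulting terms. For $D^L_v$ this is the easier of the two: since $\mathcal{F}^L_v(\xi,h)=d\bigl(\Phi_v(\xi)^\ast\alpha\circ j_h\bigr)$ and $d$ commutes with differentiation in $\xi$, it suffices to compute $\frac{d}{dt}\big|_{t=0}\bigl(\Phi_v(t\xi)^\ast\alpha\bigr)$ and the variation of $j_h$. Here I would use $\tilde\nabla\alpha=0$: parallel transport with respect to $\tilde\nabla$ preserves $\alpha$, so differentiating the pullback of $\alpha$ reduces to a Lie-derivative-type computation along the geodesic variation. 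Writing $\xi=\zeta R+\chi$ and using $\alpha(R)=1$, $\alpha(\chi)=0$, together with Cartan's formula, one gets the three terms $d\zeta\circ j$, $v^\ast(\iota_\chi d\alpha)\circ j$, and $v^\ast\alpha\circ h$ inside the bracket, yielding (\ref{eq:lin_L}).

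For $D^F_v$ I would split the differentiation into the three obvious pieces: the variation of the connection term $\Phi_v(\xi)^{-1}$, the variation of $\pi_F\,d\exp_v(\xi)$, and the variation coming from $j_h$ (which produces the term $\frac12 J\pi_F\,dv\circ h$ immediately). For the main piece, $\frac{d}{dt}\big|_{t=0}\Phi_v(t\xi)^{-1}\pi_F\,d\exp_v(t\xi)$, I would use that the geodesics are taken with respect to $\tilde\nabla$, so the $t$-derivative of $d\exp_v(t\xi)$ at $t=0$ is governed by $\tilde\nabla\xi$ together with the torsion $\tilde T$ of $\tilde\nabla$ (this is the standard "first variation of a map" computation, only with the non-torsion-free connection $\tilde\nabla$ replacing Levi-Civita). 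The $\tilde\nabla^{0,1}\chi$ term is the leading contribution; the remaining terms arise from (a) the fact that $\pi_F=\id-R\otimes\alpha$ must itself be differentiated — this is where $\tilde\nabla R=0$ and $\tilde\nabla\alpha=0$ help, so that only the $J$ in the $(0,1)$-projection and the $R$-component $\zeta$ of $\xi$ contribute — and (b) the torsion identities $\tilde T(R,X)$, $\tilde T(R,JX)=-\phi X$, which convert the "vertical" part of the variation into the $J\phi$-terms. Carefully bookkeeping the $(0,1)$-projection (using $\tilde\nabla_R J=0$ and $(\tilde\nabla_XJ)R=-X-\phi X$) should collect the remaining contributions into $\frac12(\tilde\nabla_\chi J)\pi_F\,dv\circ j$, $(J\phi\chi\otimes v^\ast\alpha)^{0,1}$, and $-\zeta J\phi\pi_F\,dv$, giving (\ref{eq:lin_F}).

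The main obstacle I anticipate is purely organizational rather than conceptual: keeping track of the interaction between the $R$-component $\zeta$ of $\xi$, the projection $\pi_F$, the almost complex structure $J$ and its covariant derivative, and the torsion of $\tilde\nabla$, all at once, while correctly taking the $(0,1)$-part at the end. In particular one must be careful that terms which would naively look like they involve $\tilde\nabla\zeta\cdot R$ get absorbed — since $\pi_F$ kills $R$ — leaving only $\zeta$ multiplied by derivatives of $\pi_F$ and $J$, which by the connection identities collapse to the $\zeta J\phi\pi_F\,dv$ term and a piece of the $v^\ast\alpha$-twisted term. A clean way to control this is to first do the computation ignoring the $R$-direction entirely (i.e. assuming $\xi=\chi\in\Gamma(v^\ast F)$), obtaining the $\tilde\nabla^{0,1}\chi$, $\frac12(\tilde\nabla_\chi J)\pi_F\,dv\circ j$, $(J\phi\chi\otimes v^\ast\alpha)^{0,1}$ and $h$-terms, and then separately compute the $\zeta R$-contribution using $\tilde\nabla R=0$, $\tilde T(R,JX)=-\phi X$, which isolates the single extra term $-\zeta J\phi\pi_F\,dv$. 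Adding the two gives the stated formula.
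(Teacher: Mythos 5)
Your plan is essentially the paper's own argument: differentiate along the $\tilde\nabla$-geodesic path, use that $\Phi_v(s\xi)^{-1}$ combined with $\frac{d}{ds}|_{s=0}$ gives the covariant derivative $\tilde\nabla_s$, write $\tilde\nabla_s dv_s = \tilde\nabla\xi + \tilde T(\xi,dv)$, and then unpack using $\tilde\nabla R=0$, $\tilde\nabla\alpha=0$, $\tilde\nabla_RJ=0$ and the torsion identities, with the $D^L_v$ computation reducing to Cartan's formula exactly as you describe. One small correction to your bookkeeping: the identity $(\tilde\nabla_XJ)R=-X-\phi X$ plays no role here (you never apply $\tilde\nabla_\chi J$ to $R$, only to $\pi_F\,dv\in F$); the reduction of $(\tilde\nabla_\xi J)$ to $(\tilde\nabla_\chi J)$ uses only $\tilde\nabla_RJ=0$.
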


\begin{proof}
  Consider the path $\setR\to C^{\infty}(\dot\Sigma,M)$ given
  by $s\mapsto v_s=\exp_v(s\xi)$ and a path $j_s$ with
  $\frac{d}{ds}j_s|_{s=0}=h$. Then
  \begin{eqnarray*}
    D_v^F(\xi,h)
    &=&\frac{d}{ds}\mathcal{F}_v^F(s\xi,sh)\big|_{s=0}\\
    &=&\frac{d}{ds}\Phi_v(s\xi)^{-1}
    \frac12\left(\pi_F\,dv_s+J\pi_Fdv_s\circ j_s\right)\big|_{s=0}\\
    &=&\frac12\tilde\nabla_s
    \left(\pi_F\,dv_s+J\pi_Fdv_s\circ j_s\right)\big|_{s=0}\\
    &=&\frac12
    \left(\pi_F\,(\tilde\nabla_sdv_s)+(\tilde\nabla_\xi
      J)\pi_Fdv\circ j+J\pi_F(\tilde\nabla_s dv_s)\circ
      j+J\pi_F\,dv\circ h \right)_{s=0}\\
    &=&\frac12
    \Big(\pi_F\,(\tilde\nabla\xi+\tilde T(\xi,dv))+(\tilde\nabla_\xi
      J)\pi_Fdv\circ j+J\pi_F(\tilde\nabla\xi+\tilde T(\xi,dv))\circ
      j\\
      &&+J\pi_F\,dv\circ h \Big)\\
    &=&\tilde\nabla^{0,1}\chi+(\pi_F\tilde T(\xi,dv))^{0,1}
    +\frac12(\tilde\nabla_\chi J)\pi_F\,dv\circ j
    +\frac12J\pi_F\,dv\circ h\\
    &=&\tilde\nabla^{0,1}\chi
    +\frac12(\tilde\nabla_\chi J)\pi_F\,dv\circ j
    +\frac12J\pi_F\,dv\circ h\\
    &&+(\tilde T(\chi,R)\otimes v^\ast\alpha)^{0,1}
    +\zeta\tilde T(R,\pi_F\,dv)\\
    &=&\tilde\nabla^{0,1}\chi
    +\frac12(\tilde\nabla_\chi J)\pi_F\,dv\circ j
    +\frac12J\pi_F\,dv\circ h
    +(J\phi\chi\otimes v^\ast\alpha)^{0,1}
    -\zeta J\phi\pi_F\,dv.
  \end{eqnarray*}
  Also
  \begin{eqnarray*}
    D_v^L(\xi,h)
    &=&\frac{d}{ds}\mathcal{F}_v^L(s\xi,sh)\big|_{s=0}
    =\frac{d}{ds} d(\exp_v(s\xi)^\ast\alpha\circ j_s)\big|_{s=0}
    =d\left((\Lie_\xi\alpha)\circ j+v^\ast\alpha\circ h\right)\\
    &=&d\left[d\zeta\circ j +v^\ast(\iota_\chi d\alpha)\circ j
    +v^\ast\alpha\circ h\right].
  \end{eqnarray*}
\end{proof}

We choose the usual functional analytic setup for the spaces of maps,
i.e. at an $\H$--holomorphic map $v:\dot\Sigma\to Z$ we
consider the linearized operator to act on
\begin{eqnarray*}
  D^F_v&:&W^{k,p}_\delta\left(\Omega^0(\dot\Sigma,v^\ast TZ)\right)\times
    T_j\mathcal{T} \to 
  W^{k-1,p}_\delta\left(\Omega^{0,1}(\dot\Sigma,v^\ast TZ)\right)\\
 D^L_v&:&W^{k,p}_\delta\left(\Omega^0(\dot\Sigma,v^\ast
   TZ)\right)\times \setR^N
 \times T_j\mathcal{T}\to  {W}^{k-2,p}_\delta\Omega^{2}(\dot\Sigma)
\end{eqnarray*}
where the weight $\delta>0$ is not in the spectrum of the asymptotic
operator at the puncture and $N$ is the number of punctures. The
$\setR^N$--factor in the linearization allows for functions that are
asymptotically constant near each puncture. For a precise definition
of this and the accompanying space of maps we refer the reader to
\cite{dragnev} or \cite{folded_holomorphic}.

Usually we view the space of gauge transformations to be infinitesimal
diffeomorphism extending smoothly over the punctures, or vanishing at
the punctures. In our case it is at times convenient to alternatively
consider more general gauge transformations that don't necessarily
extend over the punctures, but remain bounded in the cylindrical
metric.

Let $v:C_0=[0,\infty)\times S^1\to Z$ be a $J$--holomorphic map, with
maximal transverse approach given by an eigenvector of the asymptotic
operator with eigenvalue $-2\pi<\lambda<0$ and choose $\delta>0$ so that
$-\delta>\lambda$. Let $\nu$ be a vector field on the domain. Then
$\pi_F\,dv(\nu)\in W^{k,p}_\delta(v^\ast F)$ if and only if $\nu\in
W^{k,p}_{\tilde\delta}(TC_0)$, where
\begin{eqnarray}\label{tilde_delta}
\tilde\delta=\delta+\lambda<0
\end{eqnarray}
satisfies $-\tilde\delta<2\pi$.  We will use this notation for the
weights $\delta$ and $\tilde\delta$ related via the maximal eigenvalue
$\lambda$ for the remaining of this and the following section.

The following Lemma gives an equivalent description to solutions of
the linearized equation at a nicely immersed map.
\begin{lemma}\label{lem:lin_op2}
  Let $v:\dot\Sigma\to Z$ be a nicely immersed $\H$--holomorphic
  map. Then $(\xi,h)$, where $\xi\in W^{k,p}_\delta(v^\ast
  TZ)\times\setR^N$ and $h$ is a smooth deformation of complex
  structure, solves the linearized equation if and only if there
  exists an infinitesimal gauge transformation $\nu\in
  W^{k,p}_{\delta+\lambda}(T\dot\Sigma)$ and $f\in
  W^{k,p}_{\delta+\lambda}(\dot\Sigma,\setR)$ so that
  $\xi=f\,R+dv(\nu)$ and
  \begin{eqnarray}
    \tilde Df=d(df\circ j+2f\,v^\ast\alpha \circ \tilde\phi)=0\label{eq:tildeD}
  \end{eqnarray}
  where
  \begin{eqnarray*}
    \tilde\phi=(\pi_F\,dv)^\ast \phi=(\pi_F\,dv)^{-1}\circ
    \frac12\Lie_RJ\circ(\pi_F\,dv).
  \end{eqnarray*}
\end{lemma}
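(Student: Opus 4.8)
The plan is to exploit that $v$ is an immersion: then $\pi_F\,dv\colon T\dot\Sigma\to v^\ast F$ is an isomorphism of rank--two bundles, and since $v$ solves (\ref{eq:H_F}) it intertwines $j$ and $J$, hence is complex linear. First I would change variables on the $F$--component. Writing $\xi=\zeta\,R+\chi$ as usual, let $\nu$ be the unique vector field with $\chi=\pi_F\,dv(\nu)$; by the weighted comparison (\ref{tilde_delta}) recalled just before the lemma, $\chi\in W^{k,p}_\delta$ exactly when $\nu\in W^{k,p}_{\delta+\lambda}$. Since $dv(\nu)=v^\ast\alpha(\nu)\,R+\pi_F\,dv(\nu)$, the function $f:=\zeta-v^\ast\alpha(\nu)$ gives $\xi=f\,R+dv(\nu)$; because $v^\ast\alpha$ and its derivatives are bounded near the punctures (it is asymptotic to a constant--coefficient form), $v^\ast\alpha(\nu)\in W^{k,p}_{\delta+\lambda}$, and as $\delta+\lambda<0$ this space contains both $W^{k,p}_\delta$ and the asymptotically constant functions, so the $\setR^N$--summand of $\zeta$ is swallowed and $f\in W^{k,p}_{\delta+\lambda}(\dot\Sigma,\setR)$. (For the ``if'' direction one is simply handed such $\nu,f$ with $\xi=f\,R+dv(\nu)$, and nothing is to be checked in this step.)

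The next step is to peel off the gauge contribution. For any smooth vector field $\nu$ there are the infinitesimal gauge identities
\[
D^F_v\big(dv(\nu),\Lie_\nu j\big)=0,\qquad D^L_v\big(dv(\nu),\Lie_\nu j\big)=0.
\]
The second one is Cartan's calculus: $D^L_v(dv(\nu),\Lie_\nu j)=d\big(\Lie_\nu(v^\ast\alpha\circ j)\big)=\Lie_\nu\,d(v^\ast\alpha\circ j)=0$ by (\ref{eq:H_L}), where I use the product rule $\Lie_\nu(\beta\circ A)=(\Lie_\nu\beta)\circ A+\beta\circ(\Lie_\nu A)$ for a $1$--form $\beta$ and an endomorphism $A$. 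The first one is the infinitesimal naturality of $\delbar^F_J$, obtained by differentiating $\delbar^F_J(v\circ\varphi)=(\delbar^F_J v)\circ d\varphi$ at an $\H$--holomorphic $v$. Both are identities of differential operators in $\nu$, so having been checked for compactly supported $\nu$ (which integrate to flows) they persist for all smooth $\nu$ --- this is the point, since the $\nu$ produced in the previous step is only bounded, not decaying. Linearity then reduces the linearized system to $D^F_v(f\,R,\,\tilde h)=0$, $D^L_v(f\,R,\,\tilde h)=0$ with $\tilde h:=h-\Lie_\nu j$.

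Finally I would compute these two equations directly. Setting $\zeta=f$, $\chi=0$ in (\ref{eq:lin_F}) and (\ref{eq:lin_L}) kills the $\tilde\nabla^{0,1}\chi$, $(\tilde\nabla_\chi J)$ and $v^\ast(\iota_\chi d\alpha)$ terms and leaves
\[
D^F_v(f\,R,\tilde h)=\frac12\,J\,\pi_F\,dv\circ\tilde h-f\,J\phi\,\pi_F\,dv,\qquad
D^L_v(f\,R,\tilde h)=d\big(df\circ j+v^\ast\alpha\circ\tilde h\big).
\]
Using $\pi_F\,dv\circ j=J\circ\pi_F\,dv$, $J^2=-\id$ on $F$, the relation $\pi_F\,dv\circ\tilde\phi=\phi\circ\pi_F\,dv$ built into the definition of $\tilde\phi$, and injectivity of $\pi_F\,dv$, the vanishing of $D^F_v(f\,R,\tilde h)$ is equivalent to $\tilde h=2f\,\tilde\phi$; this is consistent with $\tilde h$ being a variation of complex structure because $\tilde\phi$ anticommutes with $j$ (as $\phi=\frac12\Lie_RJ$ anticommutes with $J$, and $\pi_F\,dv$ is complex linear). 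Plugging $\tilde h=2f\,\tilde\phi$ into the $L$--expression gives exactly $D^L_v(\xi,h)=d\big(df\circ j+2f\,v^\ast\alpha\circ\tilde\phi\big)=\tilde Df$, so that equation vanishes iff $\tilde Df=0$; the linearization of the period normalization (\ref{eq:H_per}) transcribes verbatim to $f$ and, in the weighted spaces, is already part of the prescribed framework. Reading this chain forwards gives the ``only if'' direction and backwards the ``if'' direction, with the accompanying deformation of complex structure being $h=\Lie_\nu j+2f\,\tilde\phi$.

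The part I expect to be delicate is the weighted--space bookkeeping: confirming that the negative--weight spaces $W^{k,p}_{\delta+\lambda}$ for $\nu$ and $f$ --- which allow asymptotically constant and mildly growing behaviour --- correspond precisely to $\xi\in W^{k,p}_\delta(v^\ast TZ)\times\setR^N$, and that the gauge identities of the second step survive for the non--integrable $\nu$ that arises. Both points are handled by the comparison (\ref{tilde_delta}) together with the exponential asymptotics of $v$ at the punctures, and by treating the gauge identities as differential--operator identities rather than as infinitesimal symmetries of a group action.
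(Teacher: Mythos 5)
Your overall approach is essentially the paper's: decompose $\xi=f\,R+dv(\nu)$ using the immersivity of $v$, eliminate $h$ from the $F$--equation, and push the result into the $L$--equation to land on $\tilde Df=0$. Your reorganization via the gauge identities $D^F_v(dv(\nu),\Lie_\nu j)=0$ and $D^L_v(dv(\nu),\Lie_\nu j)=0$ (so that the remaining computation is just at $\xi=f\,R$) is a cleaner way to arrive at the same relation $h=\Lie_\nu j+2f\tilde\phi$ than the paper's direct calculation, and your observation that these identities are pointwise differential--operator identities, hence persist for the non--decaying $\nu$ produced here, is exactly the right justification. (One notational quibble: the formula $\delbar^F_J(v\circ\varphi)=(\delbar^F_J v)\circ d\varphi$ only holds when $\varphi$ is $j$--holomorphic; the correct source of the identity is differentiating the equivariance $(v,j)\mapsto(v\circ\varphi_t,\varphi_t^\ast j)$ at $t=0$, which is what you actually invoke a sentence later.)

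There is, however, a genuine gap in the ``if'' direction that your closing paragraph flags but does not close. Starting from $\nu,f\in W^{k,p}_{\delta+\lambda}$, the $F$--component of $\xi$ lands in $W^{k,p}_\delta$ by (\ref{tilde_delta}), but the $L$--component is $\zeta=f+v^\ast\alpha(\nu)$, which a priori only lies in $W^{k,p}_{\delta+\lambda}$ --- a space that, since $\delta+\lambda<0$, admits exponential growth at rate up to $|\delta+\lambda|$, strictly larger than $W^{k,p}_\delta\times\setR^N$. The assertion that this is ``handled by the comparison (\ref{tilde_delta}) together with the exponential asymptotics of $v$'' is not a proof: (\ref{tilde_delta}) governs the $F$--component only. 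What closes the gap in the paper is that $(\xi,h)$ solves the linearized $\H$--holomorphic system, so one can apply the asymptotic analysis of the split (upper--triangular) asymptotic operator --- using precisely the hypothesis $-\tilde\delta<2\pi$ --- to conclude that the $L$--component is automatically asymptotically constant plus a term of class $W^{k,p}_\delta$; see Section 3.4 of \cite{wendl_transversality}. Without this step, the equivalence as stated (with $\xi$ in the fixed space $W^{k,p}_\delta(v^\ast TZ)\times\setR^N$) is not established.
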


\begin{proof}
  Since $v$ is immersed we can rewrite the linearized Equation (\ref{eq:lin_F}) with
  $\xi=dv(\nu)+f\,R$, where $\nu$ is a section of $T\dot\Sigma$
  and $f:\dot\Sigma\to\setR$. If $(\xi,h)$ solves the linearized
  equations, then
  \begin{eqnarray}\label{eq:nu}
    \nabla^{0,1}\nu-f j\tilde \phi+\frac12 jh=0.
  \end{eqnarray}
  Solving this for $h$
  \begin{eqnarray*}
    h=2(j\nabla^{0,1}\nu+f\tilde\phi)=\Lie_\nu j+2f\tilde\phi
  \end{eqnarray*}
  and plugging this into Equation (\ref{eq:lin_L}) we obtain
  \begin{eqnarray*}
    d(df\circ j+2f\,v^\ast\alpha\circ\tilde\phi)=0,
  \end{eqnarray*}
  after noting that
  \begin{eqnarray*}
   d( v^\ast\alpha(\nu))\circ j+(\iota_\nu v^\ast d\alpha)\circ j+v^\ast\alpha(\Lie_\nu j)
    =\Lie_\nu (v^\ast\alpha\circ j)
  \end{eqnarray*}
  is closed and that $\zeta$ in Equation (\ref{eq:lin_L}) corresponds to
  $\zeta=f+v^\ast\alpha(\nu)$.

  If $\xi\in W^{k,p}_{\delta}(v^\ast TZ)\times\setR^N$, then $\nu\in
  W^{k,p}_{\delta+\lambda}(T\dot\Sigma)$ and $f\in
  W^{k,p}_{\delta+\lambda}(\dot\Sigma,\setR)$.

  Conversely, if $\nu\in W^{k,p}_{\delta+\lambda}(T\dot\Sigma)$ and
  $f\in W^{k,p}_{\delta+\lambda}(\dot\Sigma,\setR)$, then
  $\xi=dv(\nu)+f\,R\in W^{k,p}_{\delta}(v^\ast TF)\times
  W^{k,p}_{\delta+\lambda}(v^\ast L)$, and $(\xi,h)$ satisfies the
  linearized equations. Noting that $-\tilde\delta<2\pi$ by assumption
  we employ the standard asymptotic analysis for $J$--holomorphic
  curves near the puncture to split the asymptotic operator (see
  Section 3.4 of \cite{wendl_transversality}), which is in upper
  triangular form, and see that $\xi$ is automatically of class
  $W^{k,p}_{\delta}(v^\ast TZ)\times\setR^N$.
\end{proof}

\section{Local Theory for Nicely Immersed Maps}
\label{sec:local-fredh-theory}
In this section we discuss the properties of nicely immersed
$\H$--holomorphic maps into stable Hamiltonian 3--manifolds.  As we
will see, these curves have very nice properties and are well--suited
to yield finite energy foliations.  The key result in this section is
the observation that a strong version of the implicit function theorem
\ref{thm:local_foliation} holds for nicely immersed $\H$--holomorphic
maps.

We need an expression for the asymptotic operator $A_\infty$
associated to an $\H$--holomorphic map $v$ asymptotic to a closed
characteristic $x:S^1\to Z$ of period $T$ in the trivialization given
by the eigenvector $e$ with eigenvector $\lambda<0$ associated to $v$.
Acting on sections $\eta$ of $x^\ast F$ we have
\begin{eqnarray}\label{eq:asymptotic_operator}
  A_\infty\eta=-J(\nabla_t \eta-T\nabla_\eta R)
  =-J\nabla_t \eta+T(\eta+\phi\eta),
\end{eqnarray}
where $\nabla$ denotes the Levi--Civita connection and we used that
$\nabla_X R=-JX-J\phi X$ and $\eta\in F$, where
$\phi=\frac12\Lie_{R}J$. Then with $\eta=z\,e$, $z=x+Jy$ we get
\begin{eqnarray*}
  A_\infty (z\,e)&=&-Jz\nabla_t e-J\nabla_t(z)e+Tz\,e+T\bar z\phi e\\
  &=&z(-J\nabla_t e+T(e+\phi e))-J\dot z\,e-T(z-\bar z)\phi e\\
  &=&(-J\dot z+\lambda z-T(z-\bar z)\phi) e,\\
\end{eqnarray*}
where we used that $\nabla_R J=0$ and that $\phi$ is $J$
anti--linear. Using also that $\phi$ is symmetric (see e.g. Section
6.2 in \cite{blair}) we write $\phi$ in matrix form with respect to
the trivialization given by $e$ and $Je$
\begin{eqnarray}\label{eq:hat_phi}
  \hat\phi=\left[
    \begin{array}{cc}
      \mu_1&\mu_2\\
      \mu_2&-\mu_1
    \end{array}\right].
\end{eqnarray}
Set $\tilde\mu=\mu_1+i\mu_2$. Then we can express the operator
$A_\infty$ in the trivialization given by $e$ as
\begin{eqnarray}\label{eq:a_infty}
  \tilde A_\infty z=-i\dot z+\lambda z-\,T(z-\bar z)\tilde \mu.
\end{eqnarray}

\begin{lemma}\label{lem:lin_op3}
  Let $v:\dot\Sigma\to Z$ be a nicely immersed $\H$--holomorphic map
  and let $f\in W^{k,p}_{\tilde\delta}(\dot\Sigma,\setR)$ satisfy
  $\tilde Df=0$, where $\tilde D$ is the operator from Equation
  (\ref{eq:tildeD}).

  With the same notation for $e$ and $\lambda$ as above, at each
  puncture $f$ is given by the imaginary part (expressed in the
  trivialization given by $e$ and $J\,e$) of an eigenvector $\tilde e$
  of the asymptotic operator $A_\infty$ with eigenvalue
  $\tilde\lambda\le\lambda$, or $f\equiv 0$.
\end{lemma}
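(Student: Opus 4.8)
The plan is to work near a single puncture, pass to cylindrical coordinates $C=[0,\infty)\times S^1$, and show that the equation $\tilde Df=0$ is, near infinity, an exponentially small perturbation of the linear ODE governed by $\tilde A_\infty$, so that the standard asymptotic analysis of Hofer--Wysocki--Zehnder (as in \cite{dragnev} or Section 3.4 of \cite{wendl_transversality}) applies. First I would rewrite $\tilde Df=d(df\circ j+2f\,v^\ast\alpha\circ\tilde\phi)=0$ in coordinates $(s,t)$. The $0$th order term involves $v^\ast\alpha\circ\tilde\phi$, and here the hypothesis that $v$ is asymptotic to a closed characteristic $x$ with maximal transverse approach enters: as $s\to\infty$, in the trivialization given by $e$ and $Je$ one has $\pi_F\,dv\to T\,\partial_s$ exponentially (with rate controlled by $\lambda$), $v^\ast\alpha\to T\,ds$, and $\tilde\phi=(\pi_F\,dv)^{-1}\circ\phi\circ(\pi_F\,dv)$ converges exponentially to the matrix $\hat\phi$ of \eqref{eq:hat_phi}. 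So the equation becomes $\partial_s F + A(s)F=0$ for $F=(\partial_s f,\partial_t f)$ or, better, I would unwind it directly into a first-order system for $f$ itself and recognize the limiting operator. The point is that $\tilde D$, after this reduction, has the same symbol and the same limiting asymptotic operator as the ``normal'' part of the linearized Cauchy--Riemann operator, namely $\tilde A_\infty z=-i\dot z+\lambda z-T(z-\bar z)\tilde\mu$ from \eqref{eq:a_infty}, acting on the function $f$ viewed as the imaginary part of a section of $x^\ast F$.

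The key computational step is to verify that $\tilde Df=0$, when $f$ is regarded as $\mathrm{Im}(z)$ for $z\,e$ a section of $v^\ast F$ along the cylinder, is exactly the condition that $z\,e$ lies in the kernel of the relevant first-order operator whose asymptotic operator is $A_\infty$. Concretely: $df\circ j$ in coordinates is $\partial_s f\,dt-\partial_t f\,ds$; applying $d$ and using $d(v^\ast\alpha\circ\tilde\phi)$ expanded via the convergence above yields, to leading order, $(\partial_s^2+\partial_t^2)f + (\text{first order in }\partial f) + (\text{zeroth order, with coefficients }\to 2T\hat\phi) = 0$, plus terms decaying like $e^{-cs}$. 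Matching this against the expression $\tilde A_\infty(z\,e)=(-i\dot z+\lambda z-T(z-\bar z)\tilde\mu)e$ and taking imaginary parts pins down that the limiting ODE for $f$ is $-\tilde A_\infty$-type; I would organize this so the algebra mirrors the computation already done in Section \ref{sec:linearized-equation} for \eqref{eq:a_infty}. Once the limiting operator is identified as $A_\infty$, the HWZ asymptotic theorem gives: either $f$ decays faster than every exponential, in which case unique continuation for the (locally elliptic second-order) equation $\tilde Df=0$ forces $f\equiv 0$; or $f(s,\cdot)=e^{\tilde\lambda s}(\tilde e(\cdot)+r(s,\cdot))$ with $r\to 0$, where $\tilde e$ is an eigenvector of $A_\infty$ and $\tilde\lambda$ is its eigenvalue. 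Since $f\in W^{k,p}_{\tilde\delta}$ with $\tilde\delta=\delta+\lambda<0$ and $\delta<-\lambda$, the decay rate $\tilde\lambda$ must be negative; and because $\lambda$ is the \emph{largest} negative eigenvalue of $A_\infty$ (maximality of the transverse approach) one automatically gets $\tilde\lambda\le\lambda$.

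I expect the main obstacle to be bookkeeping in the coordinate computation: one must control not just the convergence $\tilde\phi\to\hat\phi$ but also the lower-order terms produced by $d$ hitting the (non-parallel, $s$-dependent) trivialization and by the exponential deviation of $\pi_F\,dv$ from $T\,\partial_s$, and verify these are genuinely in the ``perturbation'' class (exponentially decaying, or at least decaying strictly faster than $e^{\tilde\lambda s}$) so that the asymptotic splitting applies cleanly. A secondary subtlety is the upper-triangular interplay between the tangent and normal operators emphasized in the introduction: one must make sure that writing $\xi=f\,R+dv(\nu)$ and solving the coupled system does not reintroduce the tangential direction into the asymptotics of $f$ — but this is exactly where the hypothesis $-\lambda<2\pi$ (equivalently $-\tilde\delta<2\pi$) is used, as in Lemma \ref{lem:lin_op2}, to decouple the two and conclude that $f$ alone satisfies the clean equation $\tilde Df=0$ with the stated asymptotics. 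Everything else is a routine application of the asymptotic and unique-continuation machinery for pseudoholomorphic curves.
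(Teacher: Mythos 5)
The high-level plan is right (pass to the neck, identify a first-order limiting operator, invoke the HWZ asymptotic theorem), but you are missing the essential device that makes the reduction work: you never say what the complex quantity $z$ is. The equation $\tilde Df=0$ is second-order in the scalar $f$, and to obtain a Cauchy--Riemann--type first-order equation whose asymptotic operator can be compared to $A_\infty$ one needs a conjugate variable. The paper's proof uses that $\tilde Df=0$ says precisely that the $1$--form $df\circ j + 2f\kappa$ (with $\kappa=v^\ast\alpha\circ\tilde\phi$) is closed, so on a neck one writes $df\circ j + 2f\kappa = da$ for a real primitive $a$ and sets $z=a+if$; this $z$ satisfies the genuinely first-order equation $z_s=-iz_t+(z-\bar z)\tilde\kappa$, which is the object HWZ-type asymptotics applies to. Your two sketches --- forming the system for $(\partial_sf,\partial_tf)$, or ``regarding $f$ as $\mathrm{Im}(z)$ for $z\,e$ a section of $v^\ast F$'' without specifying the real part --- both sidestep this. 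The first changes the Sobolev index and does not line up with $A_\infty$; the second requires knowing the real part, which is exactly $a$.

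A second problem: the limiting asymptotic operator for $z$ is not $A_\infty$ but the shifted operator $B_\infty=A_\infty-\lambda\,\mathrm{Id}$, and your conclusion glosses over this. The bound $\tilde\lambda\le\lambda$ comes from the shift: $B_\infty$ has eigenvalue $0$ with a purely real eigenvector (reflecting the freedom in choosing the additive constant for $a$), and, because the transverse approach is maximal and the orbit has odd Conley--Zehnder index, no eigenvalues in $(0,-\tilde\delta]$. After subtracting off the real constant $c$, the residual $\tilde z$ is asymptotic to an eigenvector of $B_\infty$ with nonpositive eigenvalue, equivalently an eigenvector $\tilde e$ of $A_\infty$ with eigenvalue $\tilde\lambda\le\lambda$ whose imaginary part is nontrivial. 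Asserting that ``the limiting operator is $A_\infty$'' and that $f$ decays at rate $\tilde\lambda$ misstates the decay rate (which is $\tilde\lambda-\lambda$, the $B_\infty$ eigenvalue) and omits the constant-$c$ step; without this the eigenvalue conclusion does not follow from $f\in W^{k,p}_{\tilde\delta}$, which in fact only controls growth at rate $-\tilde\delta$, not decay.
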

\begin{proof}
  We need to understand the asymptotics of the operator $\tilde D$ in
  terms of the asymptotic operator $A_\infty$. Let
  $\kappa=v^\ast\alpha\circ\tilde\phi$. In the neck regions adjacent
  to the punctures write
  \begin{eqnarray*}
    df\circ j+2f\kappa=da.
  \end{eqnarray*}
  evaluating this on $\del_s$ and $\del_t$ we get
  \begin{eqnarray*}
    f_t+2f\kappa(\del_s)=a_s,\qquad -f_s+2f\kappa(\del_t)=a_t.
  \end{eqnarray*}
  with $z=a+if$ and $\tilde\kappa=\kappa(\del_t)-i\kappa(\del_s)$ this gives
  \begin{eqnarray}\label{eq:complex_f}
    z_s=-iz_t+2f(\kappa(\del_s)+i\kappa(\del_t))
    =-iz_t-i(z-\bar z)(\kappa(\del_s)+i\kappa(\del_t))
    =-iz_t+(z-\bar z)\tilde\kappa.
  \end{eqnarray}
  
  We wish to understand solutions of this equation as $z\to\infty$.
  Following the usual asymptotic analysis \cite{hofer_asymptotics},
  denote the $L^2(x^\ast F)$ inner product, and the induced inner
  product on the trivialization, by $\langle.,.\rangle$, i.e. for
  $u,v$ sections of $x^\ast F$
  \begin{eqnarray*}
    \langle u,v\rangle=\int_{S^1}g(u,v)d\theta
  \end{eqnarray*}
  and in the trivialization for $z,w$ complex values functions on $S^1$
  \begin{eqnarray*}
    \langle z,w\rangle= \int_{S^1}g(z\,e,w\,e)d\theta.
  \end{eqnarray*}
  Denote the associated norm by $\norm{.}$ and set
  $\zeta=\frac{z}{\norm{z}}$.  Then
  \begin{eqnarray*}
    \zeta_t=\frac{z_t}{\norm{z}},\qquad\zeta_s=\frac{z_s}{\norm{z}}-\alpha
    \zeta,\quad\mathrm{where}\ \alpha(s)=\frac{\langle z_s,z\rangle}{\norm{z}^2}.
  \end{eqnarray*}
  Then $\zeta$ satisfies
  \begin{eqnarray*}
    \zeta_s=B\zeta+\alpha\zeta,\qquad B\zeta=-i\zeta_t+(\zeta-\bar\zeta)\tilde\kappa
  \end{eqnarray*}
  Now consider the behavior of the terms in $B$ as $s\to\infty$.  We have
  $v^\ast\alpha\to T\,dt$. To understand the behavior of $\tilde\phi$,
  recall that (see Section 4 in \cite{hofer_fredholm})
  \begin{eqnarray*}
    \frac{\pi_F\,dv(\del_s)}{\norm{\pi_F\,dv(\del_s)}}\to \frac{e}{\norm{e}},\qquad
    \mathrm{as}\ s\to\infty
  \end{eqnarray*}
  where $e$ is the eigenvector of $A_\infty$ with eigenvalue $\lambda$
  governing the transverse approach of $v$ at the puncture.  So in the
  basis $\del_s$ and $\del_t$ we have $\tilde \phi\to\hat\phi$, the
  matrix from Equation (\ref{eq:hat_phi}). Thus $\kappa(\del_s)\to T\mu_2$
  and $\kappa(\del_t)\to -T\mu_1$, so $\tilde\kappa\to
  -T(\mu_1+i\mu_2)=-T\tilde\mu$.  Then the operator $B$ approaches
  \begin{eqnarray*}
    B_\infty \zeta=-i\zeta_t-T(\zeta-\bar\zeta)\tilde\mu,
  \end{eqnarray*}
  and the same asymptotic analysis as in \cite{hofer_asymptotics}
  shows that solutions to Equation (\ref{eq:complex_f}) converge to
  eigenvectors of $B_\infty$ exponentially fast, with rate governed by
  the corresponding eigenvalue.  Comparing with Equation
  (\ref{eq:a_infty}) we see that $B_\infty=A_\infty-\lambda\id$, so
  the spectrum of $B_\infty$ is the spectrum of $A_\infty$ shifted by
  $-\lambda$ with identical eigenvectors.

  Since $A_\infty$ has no eigenvalue in the interval
  $(\lambda,-\delta]$, and has one eigenvalue $\lambda$ we have that
  $B_\infty$ has no eigenvalue in the interval $(0,-\tilde\delta]$,
  and has an eigenvalue 0. 

  Let $z$ be a solution to Equation (\ref{eq:complex_f}) of class
  $W^{k,p}_{\tilde\delta}$. Write $z=c+\tilde z$, where $c$ is a real
  constant and $\tilde z$ has vanishing average real part at the
  puncture. Then $\tilde z$ either vanishes identically or is an
  eigenvector of $\hat A_\infty$ with nonpositive eigenvalue and the
  imaginary part of $\tilde z$ does not vanish identically.
\end{proof}

After understanding the asymptotic behavior of elements in the kernel
of $\tilde D$ we prove a transversality and non--vanishing result for
nicely immersed curves. Recall our notation that
$\tilde\delta=\delta+\lambda<0$, where $\delta$ is the weight at the
punctures and $\lambda$ is the eigenvalue governing the asymptotic
approach.
\begin{lemma}\label{lem:positive_solution}
  Let $v$ be a nicely immersed $\H$--holomorphic map. Then the
  operator $\tilde D$ from Equation (\ref{eq:tildeD}) has index 1, is
  surjective and any non--trivial solution $f$ to $\tilde Df=0$ has no zeros.
\end{lemma}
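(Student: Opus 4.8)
The plan is to deduce all three assertions at once from the observation that, after the algebraic reduction already performed in Lemma \ref{lem:lin_op2}, $\tilde D$ is a real--linear Cauchy--Riemann type operator, and then to run the automatic--transversality scheme of Wendl and Hofer--Wysocki--Zehnder (cf. \cite{hofer_asymptotics, wendl_transversality}) with the asymptotic input supplied by Lemma \ref{lem:lin_op3}. First I would put $\tilde D$ into normal form. In a neck adjacent to a puncture, Equation (\ref{eq:complex_f}) already exhibits $\tilde Df=0$ as the imaginary part of a genuine generalized Cauchy--Riemann equation $\delbar z=\frac12(z-\bar z)\tilde\kappa$ for $z=a+if$, with $a$ the local potential of the closed $1$--form $df\circ j+2f\kappa$, and the proof of Lemma \ref{lem:lin_op3} identifies its asymptotic operator as $B_\infty=A_\infty-\lambda\,\id$. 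More invariantly, by Lemma \ref{lem:lin_op2} and the substitution $\xi=fR+dv(\nu)$ the kernel of $\tilde D$ is the space of infinitesimal $\H$--holomorphic deformations of $v$; lifting $v$ to the $J$--holomorphic curve $\hat v=(b,v)$ in the symplectization $\setR\times Z$ (where locally $db=v^\ast\alpha\circ j$), this becomes the kernel of the normal Cauchy--Riemann operator of $\hat v$ on the generalized normal bundle $E\to\dot\Sigma$, a complex line bundle. Since $R$ is transverse to $\mathrm{im}\,dv$ wherever $v$ is immersed and transverse to $R$, the $R$--component $f$ is exactly what detects the failure of an infinitesimal deformation to be tangential, which is what makes zeros of $f$ accessible to positivity.

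For the index: $\mathrm{ind}\,\tilde D$ equals, by construction, the Fredholm index of $v$, which is $1$ by the nicely immersed hypothesis. What matters more for the zero count below is that the Riemann--Roch / Fredholm index formula for weighted Cauchy--Riemann operators on punctured surfaces (cf. \cite{dragnev, wendl_transversality}) expresses this index through $\chi(\Sigma)$, the number $N$ of punctures, and the windings of the asymptotic eigenvectors relative to the trivialization given by $e$; the maximality hypothesis $-\lambda<2\pi$ and the odd Conley--Zehnder condition (equivalently $\hat\lambda\le\lambda$) constrain those windings enough that the relative Chern number controlling the zeros of kernel elements is at most $0$.

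For surjectivity and the absence of zeros, both of which rest on positivity: by the similarity principle a nontrivial solution $f$ of $\tilde Df=0$ corresponds to a nontrivial section $\eta$ of $E$ in the kernel of a Cauchy--Riemann operator, so $\eta$ has only isolated zeros, each of strictly positive local index, and by Lemma \ref{lem:lin_op3} the winding of $\eta$ at every puncture $p$ is at most that of $e_p$ (the governing asymptotic eigenvalue $\tilde\lambda_p$ satisfying $\tilde\lambda_p\le\lambda_p$). The algebraic count of the zeros of $\eta$ is then bounded above by the relative Chern number from the index computation, which is $\le 0$; since the count is non--negative it is $0$, so $\eta$ --- and hence $f$ --- has no zeros. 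The same dichotomy applied to the formal adjoint $\tilde D^\ast$ (again of Cauchy--Riemann type, carrying the complementary asymptotic data) gives a \emph{strictly} negative upper bound for the zero count of any nonzero element of $\coker\tilde D=\ker\tilde D^\ast$, which is absurd; hence $\tilde D$ is surjective, and with the index $\dim\ker\tilde D=1$.

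The main obstacle is the identification step together with the asymptotic bookkeeping. One must make precise the identification of $\ker\tilde D$ with sections of the complex normal bundle of $\hat v$ --- in particular handle the locally defined potentials $a$ and $b$ and their periods alongside the $\setR^N$--factor and the linearized period condition, and account for the fact that the ``normal'' direction is one real dimensional inside $Z$ and only becomes complex in $\setR\times Z$, so that transporting the non--vanishing of $\eta$ back to non--vanishing of $f$ on $\dot\Sigma$ needs an $\setR$--equivariance argument in the spirit of the intersection theory of \cite{siefring_intersection}. The other delicate point is the sharpness of the Chern--number bound, which hinges on reading off the winding inequalities at the punctures exactly from $-\lambda<2\pi$ and the odd Conley--Zehnder condition via Lemma \ref{lem:lin_op3}; note, for instance, that $f$ must vanish identically near any puncture at which its asymptotic eigenvector is a real multiple of $e_p$, since $e_p$ is represented by a real constant in the trivialization given by $e_p$ and $Je_p$.
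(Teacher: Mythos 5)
Your proposal takes a genuinely different route from the paper. The paper works directly with $\tilde D$ as a real second--order elliptic operator: surjectivity comes from the maximum principle applied to the formal adjoint (whose kernel must decay since $-\tilde\delta>0$), the index $1$ comes from deforming the zeroth--order term $\kappa$ to a model $\kappa_0$ whose kernel is explicitly one--dimensional, and the nowhere--vanishing of $f$ comes from a continuation argument along that deformation combined with Harnack's inequality. You instead lift to the symplectization and appeal to the complex Cauchy--Riemann machinery (similarity principle plus relative Chern number / winding bounds in the style of Wendl and HWZ). Both strategies are in the air in this subject, and your index identification via Lemma \ref{lem:lin_op2} is fine, but your positivity step has a real gap.

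The gap: the similarity principle and the winding/Chern number count constrain the zeros of the \emph{complex} section $\eta$, i.e.\ of $z=a+if$, not of the real--valued function $f$ alone. Your sentence ``so $\eta$ --- and hence $f$ --- has no zeros'' is exactly where the argument breaks: $f=\mathrm{Im}\,z$ can vanish at interior points where $a\ne 0$, and such points contribute nothing to the zero count of $\eta$. What the Lemma needs, and what matters for the local foliation of $Z\setminus B$ (as opposed to $\setR\times Z$), is that the $R$--component $f$ is nowhere zero, because two nearby $\H$--holomorphic maps intersect in $Z$ precisely at the zeros of $f$, whereas their symplectization lifts intersect only at common zeros of $a$ and $f$. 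Your own caveat at the end (the normal direction ``is one real dimensional inside $Z$ and only becomes complex in $\setR\times Z$'') names the issue, but the hand--off to an ``$\setR$--equivariance argument'' does not close it. To go from ``$\eta$ nowhere zero'' to ``$f$ nowhere zero'' one typically writes $\eta/\eta_0$ (with $\eta_0$ the $\setR$--translation, which in the trivialization is the constant $1$) and shows its imaginary part satisfies a real elliptic equation obeying the strong maximum principle --- at which point one is back to the paper's maximum--principle/Harnack toolkit. A parallel remark applies to your surjectivity claim: you assert the adjoint is ``again of Cauchy--Riemann type,'' but $\tilde D^\ast$ is a real second--order operator on a real line, and the paper's route (decay at the punctures from $-\tilde\delta>0$ plus the maximum principle) is the one that actually closes.
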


\begin{proof}
  First we consider a neighborhood of each puncture.  Let $\lambda$ be
  the eigenvalue of the eigenvector $e$ governing the transverse
  approach at the puncture. The eigenvector $\hat e$ of the asymptotic
  operator with maximal eigenvalue $\hat\lambda\le\lambda$ has the
  same winding as $e$. Recall that $\hat e$ and $e$ are pointwise
  linearly independent (see Lemma 3.5 of \cite{hofer_embedding}).
  
  We aim to show that solutions to Equation (\ref{eq:complex_f}) are
  necessarily asymptotic to the imaginary part $\hat e$ (in the
  trivialization given by $e$ and $Je$), after subtracting off a
  constant real part. We will do this by relating the kernel of
  $\tilde D$ to the kernel of the associated system of first order
  equations (in complex notation)
  \begin{eqnarray*}
    L:W^{k,p}_{\tilde\delta}(\dot\Sigma,\setC)\times \tilde\H^{0,1}_\setC\to
    W^{k-1,p}_{\tilde\delta}(T^{0,1}\dot\Sigma),\qquad
    L(z,\eta)=\delbar z-(z-\bar z)\kappa^{0,1}+\eta.
  \end{eqnarray*}

  First recall that 
  \begin{eqnarray*}
    \tilde D:W^{k,p}_{\tilde\delta}(\dot\Sigma,\setR)\to
    W^{k-2,p}_{\tilde\delta}(\dot\Sigma),\qquad
    \tilde D f=\ast d(df\circ j+2f\,v^\ast\alpha\circ\tilde\phi).
  \end{eqnarray*}
  is Fredholm if and only if 
  \begin{eqnarray*}
    D_\infty(\lambda):W^{k,p}_{\tilde\delta}(S^1,\setR)\to W^{k-2,p}_{\tilde\delta}(S^1,\setR),\qquad
    D_\infty (\lambda)=-\lambda^2f-f_{tt}+2T(\lambda \mu_2f+(f\mu_1)_t)
  \end{eqnarray*}
  is an isomorphism (c.f. \cite{lockhart_mcowen}), and that the
  Fredholm index is constant on connected components of $\setR$ on
  which $D_\infty$ is an isomorphism. 

  We claim that the set of $\mu$ for which $D_\infty$ is an
  isomorphism coincides with the spectrum of $B_\infty$, with the
  exception of the eigenvalue 0 of $B_\infty$. If $\mu$ is an
  eigenvalue of $B_\infty$ with eigenvector $\zeta$, then a
  straightforward calculation shows that $D_\infty(\mu)f=0$, where
  $f=\Im(\zeta)$. If $\mu\ne 0$, then $f$ is non--trivial (by
  Lemma 3.5 of \cite{hofer_embedding}), and if $\mu=0$, then
  $D_\infty$ is an isomorphism since by assumption the eigenvalue of
  $B_\infty$ has multiplicity 1 and the eigenvector is purely real.

 Conversely, if $\mu\ne 0$ and $D_\infty(\mu)f=0$ then
  $f\not\equiv 0$ and
  \begin{eqnarray*}
    \zeta=a+if,\qquad a=\frac1\mu(f_t+2T\mu_2f)
  \end{eqnarray*}
  is an eigenvector of $B_\infty$ with eigenvalue $\mu$.

  The adjoint $\tilde D^\ast$ of $\tilde D$ is given by
  \begin{eqnarray*}
    \tilde D^\ast:W^{-k+2,p}_{-\tilde\delta}(T^\ast\dot\Sigma)\to 
    W^{-k,p}_{-\tilde\delta}(\Lambda^2\dot\Sigma),\qquad
    \tilde D^\ast g
    =\ast d(dg\circ j)-2dg\wedge v^\ast\alpha\circ\tilde\phi,
  \end{eqnarray*}
  where $\ast$ is taken with respect to the cylindrical metric. By
  elliptic regularity the kernel of $\tilde D^\ast$ is the same as the
  kernel of
  \begin{eqnarray*}
      \tilde D^\ast:W^{l,p}_{-\tilde\delta}(T^\ast\dot\Sigma)\to 
    W^{l-2,p}_{-\tilde\delta}(\Lambda^2\dot\Sigma)
  \end{eqnarray*}
  for any $l$, which is trivial, as $\tilde D$ satisfies a maximum
  principle and the weight $-\tilde\delta>0$. Thus $\tilde D$ is
  surjective.  In particular, any non--trivial solution of $\tilde D f=0$ is
  asymptotic to the eigenvector $\hat e$ of eigenvalue $\hat\mu$ at
  each puncture.

  The same argument shows that $\tilde D_r f=\ast d(df\circ
  j+2f\,\kappa_r)$ is surjective for any $r$, where $\kappa_r$ is a
  family of 1--forms so that the asymptotic operator of the associated
  first order equation has fixed eigenvalues 0 and
  $\hat\mu=\hat\lambda-\lambda\le 0$ for the eigenvectors with winding
  zero. By standard theory (see \cite{hofer_embedding}) no other
  eigenvalues of the operator in this family can enter the interval
  $[\hat\mu,0]$. 

  Thus solutions to $\tilde D_rf=0$ are non--zero in some neighborhood
  of the punctures, where the neighborhood may depend on $r$. We now
  show that non--trivial solutions $f$ have no zeros.

  Let $\kappa_r$, $r\in[0,1]$ be a family of 1--forms with
  $\kappa_1=\kappa$ and $\kappa_0=\beta(s)\frac{\hat\mu}{2}\,dt$, where
  $\beta:\dot\Sigma\to[0,1]$ is a cutoff function supported on the
  neck regions adjacent to the punctures and equal to 1 in some
  neighborhood of the punctures and monotone on each neck. Then
  \begin{eqnarray*}
    \tilde D_0f=-\Delta f+\hat\mu\del_s(\beta\,f)
  \end{eqnarray*}
  has kernel given by functions that are equal to some constant $c$ on the thick part
  and equal to
  \begin{eqnarray*}
    f(s,t)=c\,e^{\hat\mu\int_0^s \beta(s')ds'}
  \end{eqnarray*}
  on the necks, remembering that $f\in W^{k,p}_{\tilde\delta}$. In
  particular, the kernel is 1--dimensional so $\tilde D$ has index 1,
  and non--trivial solutions to $\tilde D_0f=0$ have no zeros. Now fix
  a point $p$ in the tick part and consider the family of solutions
  $f_r$ of $\tilde D_rf_r=0$ with $f_r(p)=1$. All $f_r$ are positive
  in some neighborhood (depending on $r$) of the punctures. We claim
  that $f_r\ne 0$ for any $r\in[0,1]$. If not, there exists a smallest
  $r>0$ and so that $f_r>=0$ and $f_r(z)=0$ for some $z\in
  \dot\Sigma$. But this is impossible by Harnack's inequality. Thus
  $f_r>0$ on $\dot\Sigma$ for all $r\in[0,1]$.
\end{proof}

Lemma \ref{lem:positive_solution} gives the local model for families
of nicely immersed curves. The following Theorem makes this precise.
\begin{theorem}\label{thm:local_foliation}
  Let $v:\dot\Sigma\to Z$ be a nicely immersed $\H$--holomorphic
  map. Then there exists a smooth 1--dimensional family of
  $\H$--holomorphic maps which is, up to gauge transformations, given
  by translation along the characteristic direction by a non--zero
  function. If $v$ is nicely embedded and asymptotic to a collection
  of closed characteristics $B$ then the family locally foliates
  $Z\setminus B$.
\end{theorem}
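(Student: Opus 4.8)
The plan is to run the implicit function theorem on the local moduli problem, using the transversality established in Lemma~\ref{lem:positive_solution}, and then read the foliation property off the fact that the deformation direction $R$ is everywhere transverse to the image of $v$.

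First I would set up the local moduli space of $\H$--holomorphic maps near $v$, divided by the enlarged gauge group of infinitesimal diffeomorphisms bounded in the cylindrical metric. By Lemma~\ref{lem:lin_op2} the change of variables $\xi = f\,R + dv(\nu)$, $h = \Lie_\nu j + 2f\,\tilde\phi$ identifies the kernel of the linearized operator at $v$ modulo gauge with $\ker\tilde D$ and, more generally, reduces the local problem modulo gauge to one governed by the scalar operator $\tilde D$ of Equation~(\ref{eq:tildeD}) on $W^{k,p}_{\tilde\delta}(\dot\Sigma,\setR)$. By Lemma~\ref{lem:positive_solution} the operator $\tilde D$ has index $1$, is surjective, and its kernel is spanned by a nowhere--vanishing function, which I normalize to $f_0>0$; since the Fredholm index of the nicely immersed $v$ is $1$, this is exactly transversality. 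Hence, by the implicit function theorem and elliptic regularity, there is a smooth one--parameter family $\tau\mapsto v_\tau$, $\tau\in(-\varepsilon,\varepsilon)$, of smooth $\H$--holomorphic maps with $v_0=v$ and $\partial_\tau v_\tau|_{\tau=0}=f_0\,R$ modulo a reparametrization of $\dot\Sigma$, and this is the entire local moduli space modulo gauge.

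Next I would identify this family, up to gauge, with translation in the characteristic direction by a nowhere--zero function. Being nicely immersed is an open condition --- the Fredholm index is locally constant and the maximal--approach hypothesis is open --- so after shrinking $\varepsilon$ each $v_\tau$ is again nicely immersed, and Lemmas~\ref{lem:lin_op2}, \ref{lem:lin_op3} and~\ref{lem:positive_solution} apply at $v_\tau$: the tangent vector of the family at $v_\tau$ is, modulo gauge, $f_\tau\,R$ with $f_\tau$ nowhere zero, and at every puncture $f_\tau$ is the imaginary part of an eigenvector $\hat e$ of $A_\infty$ with eigenvalue $\hat\lambda<\lambda$ --- the eigenvalue $0$ of $B_\infty$ is excluded because its eigenvector is purely real whereas $f_\tau\neq 0$ --- so $f_\tau$ decays exponentially at the punctures. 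Integrating in $\tau$ exhibits $v_\tau$, up to a $\tau$--dependent reparametrization of $\dot\Sigma$, as $v$ pushed along the flow of $R$ by an amount with first variation $f_\tau$; the exponential decay of $f_\tau$ shows every $v_\tau$ is asymptotic to the same closed characteristics, which for a nicely embedded $v$ is $B$, each covered once.

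Finally I would deduce the foliation. For an $\H$--holomorphic map $\pi_F\,dv$ is complex linear on each tangent space, hence zero or an isomorphism onto $F$; it cannot vanish at a point, since $v$ is immersed and $\dim\dot\Sigma=2>\dim L$, so $\pi_F\,dv$ is everywhere an isomorphism and $dv_z(T_z\dot\Sigma)$ is a $2$--plane in $T_{v(z)}Z=F_{v(z)}\oplus\setR R_{v(z)}$ projecting isomorphically onto $F_{v(z)}$, with $\setR R_{v(z)}$ a complement to it. Therefore $f_\tau(z)\,R$ is everywhere transverse to $dv_\tau(T_z\dot\Sigma)$, so after fixing the gauge the evaluation map $(\tau,z)\mapsto v_\tau(z)$ is a local diffeomorphism on a neighborhood of $\{0\}\times\dot\Sigma$, and its image is an open subset of $Z$ disjoint from $B$. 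Shrinking $\varepsilon$ and using that a nicely embedded $v$ is an embedding of the radial compactification, together with the asymptotic analysis near each orbit (the eigenvectors $\hat e$ and $e$ have the same winding and are pointwise linearly independent), the surfaces $v_\tau(\dot\Sigma)$ are pairwise disjoint in a punctured neighborhood of each orbit; hence they are the leaves of a foliation of an open neighborhood of $\mathrm{image}(v)$ inside $Z\setminus B$. The step I expect to be the main obstacle is the first one: matching the functional--analytic frameworks so that surjectivity of $\tilde D$ genuinely yields a transverse local moduli problem (keeping track of the $\setR^N$ factors, the Teichm\"uller slice $T_j\mathcal T$ and the enlarged gauge group) and that the implicit function theorem produces honest smooth $\H$--holomorphic maps with $\tau$--uniform asymptotic decay, together with the behavior near $B$, where the family pinches onto the orbits and the bare transversality argument must be supplemented by the asymptotic analysis of the previous section.
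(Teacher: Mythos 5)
Your proposal is correct and follows essentially the same route as the paper's proof: reduce the linearized problem to the scalar operator $\tilde D$ via Lemma~\ref{lem:lin_op2}, invoke Lemma~\ref{lem:positive_solution} for surjectivity, index~1, and the nowhere--vanishing kernel element, apply the implicit function theorem to get the 1--parameter family, and read the foliation property off the nonvanishing of $f$ together with the transversality of $R$ to the image and the extension of $f$ over the radial compactification.
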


\begin{proof}
  Let $(\xi,h)$ be in the kernel of the linearized operator. By Lemma
  \ref{lem:lin_op2} we can choose an infinitesimal gauge
  transformation $\nu$ so that $\xi=dv(\nu)+f\,R$, where $f$ satisfies
  $\tilde D\,f=0$ and $\tilde D$ is the operator from Equation
  (\ref{eq:tildeD}).

  By Lemma \ref{lem:positive_solution} the operator $\tilde D$ is
  surjective and elements in the kernel extend continuously to the
  radial compactification at the punctures, they are unique up to
  scaling, and non--trivial solutions have no zeros.

  By Lemma \ref{lem:lin_op2} there is a bijective correspondence
  between elements in the kernel of the system linearized equations
  from Lemma \ref{lem:linearization} and elements in the kernel of
  $\tilde D$. The surjectivity of $\tilde D$ then implies that the
  system of linearized equations is surjective and $v$ lives in a
  1--parameter family. By Lemma \ref{lem:positive_solution} we see
  that the 1--dimensional space of solution is infinitesimally
  obtained from one another by ``translating'' in the characteristic
  direction by the nowhere zero bounded function $f$, up to gauge
  transformation.

  Let $v_t$, $t\in I$ denote such a local family of solutions with
  $v_0=v$.  If $v$ is embedded, then so are nearby curves in this
  family, and since the function $f$ has no zeros there exists
  $\varepsilon>0$ so that $v_t$ are mutually disjoint for
  $t\in(-\varepsilon,\varepsilon)$. Thus the map $\hat
  v:(-\varepsilon,\varepsilon)\times \dot\Sigma\to Z\setminus B$ is a
  diffeomorphism onto its image, so solutions foliate a tubular
  neighborhood of the image curve $v$ in $Z\setminus B$. 
\end{proof}

\section{Global Theory and Compactness}
The local foliation results from the previous section lead to
no-first-intersection results that in turn allow to show that the
periods of families of nicely embedded curves are uniformly bounded
and thus yields compactification of connected components of the moduli
space of nicely embedded curves \ref{thm:moduli_space}. 

When analyzing nicely embedded maps we will frequently choose a
complement $\tilde \H$ of the coexact forms in the coclosed forms that
is supported away from the punctures and use this to lift maps to the
symplectization. The definition of $\tilde \H$ depends on the choice
of domain complex structure $j$, so when considering families of maps
and complex structures $j_s$ we may consider families of complements
$\tilde \H_s$ that are supported away from the punctures and sometimes
also away from neighborhoods of other points in the interior of
$\dot\Sigma$.  The significance of choosing the support away from the
punctures or other points is so that the lifts are then locally
$J$--holomorphic and the standard theory applies there.

Similar to the $J$--holomorphic case, we can prove a ``no first
intersections''--type result for families of nicely embedded
$\H$--holomorphic maps.
\begin{lemma}\label{lem:first_intersection}
  For $t\in I=(a,b)\subset\setR$, let $v_t:(\dot\Sigma,j_t)\to Z$ be a
  family of somewhere injective index 1 $\H$--holomorphic maps with
  smooth domain complex structures $j_t$ and assume $v_t$ is nicely
  embedded for some $t\in I$. Then all $v_t$ are nicely embedded for
  $t\in I$, and if $v_{t_0}(z_0)=v_{t_1}(z_1)$ for some $t_0,t_1\in I$
  and $z_0,z_1\in\dot\Sigma$, then the images of $v_0$ and $v_1$
  coincide.
\end{lemma}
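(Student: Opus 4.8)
The plan is to run a no--first--intersection argument built on a weak form of the intersection theory for punctured holomorphic curves. Since an $\H$--holomorphic map is locally $J$--holomorphic, and since after choosing a complement $\tilde\H$ supported away from the punctures the lift to the symplectization $\setR\times Z$ is locally $J$--holomorphic there, two such maps whose images do not coincide meet in isolated points (similarity principle), and the maximal transverse approach together with $-\lambda<2\pi$ controls the ends so that there are only finitely many intersections, each carrying a strictly positive local index. Adding Siefring's asymptotic contribution at pairs of ends asymptotic to a common closed characteristic (cf.\ \cite{siefring_intersection}) produces a number $\iota(v_{t_0},v_{t_1})$ which is invariant under homotopies through such families and which, when the images are distinct, dominates the geometric intersection count plus nonnegative asymptotic terms:
\[
  \iota(v_{t_0},v_{t_1})\;\ge\;\#\big(\text{interior intersections of }v_{t_0},v_{t_1}\big)\;+\;\big(\text{asymptotic contributions}\big)\;\ge\;0 .
\]
The lemma then reduces to two claims: $\iota$ is constant on the connected parameter space $I\times I$, and it vanishes at the nicely embedded member.

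First I would establish the homotopy invariance of $\iota$ in $(t_0,t_1)$. Away from the punctures this is the classical argument: an interior intersection cannot appear or disappear without first becoming tangential, which by positivity of intersections would change the local index and hence $\iota$; here one can even compare nearby members of the family directly, since by Theorem \ref{thm:local_foliation} each nicely embedded map sits in a genuine local foliation. At the punctures, invariance of the asymptotic contribution reduces to invariance of the relevant asymptotic winding numbers, which are pinned down by the exponential convergence of ends to eigenvectors of the asymptotic operator underlying Lemma \ref{lem:lin_op3}, using that $A_\infty$ has no spectrum in the gap $(\lambda,-\delta]$ so the winding cannot jump. Since $I\times I$ is connected, $\iota$ takes a single value on it.

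Next I would compute $\iota(v,v)=0$ for a nicely embedded $v$. Here Theorem \ref{thm:local_foliation} does the work: a nicely embedded map has, as its normal deformation, the nowhere--zero function $f$ with $\tilde D f=0$ from Lemma \ref{lem:positive_solution}, whose ends are governed by the eigenvector $\hat e$ with eigenvalue $\hat\lambda\le\lambda$. This nowhere--zero normal section forces the normal self--intersection count and all asymptotic winding defects to vanish via an adjunction--type identity, exactly as for nicely embedded $J$--holomorphic curves in \cite{wendl_transversality}, with $\hat e$ pointwise independent of $e$ by \cite{hofer_embedding}; hence $\iota(v,v)=0$, and moreover $v$ has embedded image with simply covered asymptotic orbits, i.e.\ it extends to an embedding on the radial compactification.

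Combining the two steps, invariance gives $\iota(v_{t_0},v_{t_1})=\iota(v_{t^*},v_{t^*})=0$ for the nicely embedded reference $v_{t^*}$ and all $t_0,t_1\in I$. If the images of $v_{t_0}$ and $v_{t_1}$ did not coincide, then $0=\iota(v_{t_0},v_{t_1})$ would have to exceed the geometric intersection count, which is at least $1$ because $v_{t_0}(z_0)=v_{t_1}(z_1)$; so the images coincide. Taking $t_0=t_1=t$, the identity $\iota(v_t,v_t)=0$ together with positivity of intersections and the adjunction inequality shows each $v_t$ is embedded with simply covered asymptotic orbits, and since being index $1$ with maximal transverse approach is an open and closed condition along $I$, every $v_t$ is nicely embedded. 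I expect the \emph{main obstacle} to be the rigorous set--up of the asymptotic intersection theory and, specifically, the proof that the combined geometric--plus--asymptotic intersection number is homotopy invariant across the ends, where ends asymptotic to a common orbit can create or conceal intersections; the interior positivity of intersections is routine.
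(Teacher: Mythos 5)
Your plan is in spirit the argument the paper itself recommends at the end of its proof, where it remarks that the lemma ``follows more easily from recent work of R.\ Siefring (Theorem 2.10 of \cite{siefring_intersection}).'' The paper's own proof, however, does not package the estimate into a single homotopy-invariant number $\iota$ and compute it on the diagonal; instead it runs a boundary/first-time argument. Concretely, it takes the maximal subinterval $I_0\subset I$ on which the maps are embedded, picks $s\in\partial I_0\cap I$, perturbs $J$ so the self-intersections of $v_s$ are confined to finitely many balls, chooses a complement $\tilde\H$ of the coexact forms supported away from those balls and the punctures, lifts the entire family to the symplectization so that the lifts are genuinely $J$--holomorphic exactly where intersections and ends live, and only then invokes positivity of intersections plus Siefring's asymptotic control to derive a contradiction between the vanishing intersection number for $t\in I_0$ and its positivity at $s$; a parallel argument treats pairs of distinct curves. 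Your version, which normalizes everything through a single invariant $\iota$ that is constant on $I\times I$ and vanishes at the nicely embedded member via the nowhere-zero normal section, is cleaner and closer to the Siefring--Wendl formalism, at the cost of importing the full asymptotic intersection theory rather than the weaker asymptotic estimates the paper tries to make do with.

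Two points you should tighten. First, the quantity $\iota(v,v)$ needs a careful definition: it is not the geometric intersection count of a curve with itself but the normal Chern number plus asymptotic terms, and its relation to $\iota(v_{t_0},v_{t_1})$ for $t_0\ne t_1$ is via the homotopy-invariance/limit argument, so you should formulate step 3 as the computation of this self-intersection quantity and explain why the asymptotic winding defects vanish (here the assumption that transverse approach is maximal with $-\lambda<2\pi$ is exactly what is needed, and indeed gives $c_N=0$ from the index formula once you account for the $2g-1$ shift between the $\H$-holomorphic and $J$-holomorphic indices). Second, $\iota$ and the positivity statements live in the four-dimensional symplectization, not in $Z$; you allude to the lift via $\tilde\H$ but do not spell out that the choice of $\tilde\H$ must be made depending on $j_t$ in a way that keeps the lifts $J$--holomorphic near the intersection points and near the ends throughout the homotopy. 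This is precisely the technical work the paper's proof does and that yours leaves implicit. Your remark that ``an interior intersection cannot appear or disappear without first becoming tangential'' is also not the right mechanism: interior intersections of distinct curves can be created from scratch, and what is invariant is the geometric count plus the asymptotic defect, with the two trading off as intersections escape through the ends; you acknowledge this is the crux, but the sentence as written is misleading.
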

\begin{proof}
  First we show that all maps $v_t$ are embedded. Suppose $I_0\subset
  I$ is a maximal interval so that all maps in $I_0$ are embedded. The
  interval $I_0$ is open and by assumption not empty. If $I=I_0$, then
  all maps are embedded and there is nothing to prove. Otherwise let
  $s\in \del I_0\cap I$.  By possibly perturbing the almost complex
  structures in the neighborhood of the image of an injective point of
  $v_s$ and considering the corresponding family of maps that are
  $\H$--holomorphic with respect to that almost complex structure, we
  may assume that the self--intersections of $v=v_s$ are occurring in
  the union of some disjoint open balls in $\dot\Sigma$.

  Let $\tilde U\subset\dot\Sigma$ be the union of disjoint open balls
  containing the self--intersection locus and the punctures and denote
  the union of slightly bigger disjoint open balls containing $\tilde
  U$ by $U$. Choose a complement $\tilde \H$ of the coexact forms in
  the coclosed forms so that its elements are supported off $U$. Let
  $(a,v)$ be a lift of $v$ to the symplectization with respect to
  $\tilde \H$, i.e. $v^\ast\alpha+da\circ j\in\tilde \H$. By possibly
  modifying $\tilde \H$ by adding a function that is supported in the
  $U$ we may assume that $\tilde \H$ is supported off $\tilde U$ and
  the lift $(a,v)=(a_s,v_s)$ has at least one self--intersection.

  Now extend $\tilde \H=\tilde \H_s$ to a family $\tilde H_t$ of
  complements of the coexact forms in the coclosed forms
  (w.r.t. $j_t$) that is supported off of $\tilde U$. We use
  $\tilde\H_t$ to lift the entire family $v_t$ to the symplectization
  by choosing a family of functions $a_t:\dot\Sigma\to Z$ so that
  $v_t^\ast\alpha-da_t\circ j\in \tilde \H_t$ that extend the lift
  $(a_s,v_s)$. The maps $\tilde v_t=(a_t,v_t)$ are $J$--holomorphic on
  $\tilde U$, so the self--intersection number of $\tilde v_s$ is
  positive. But $\tilde v_t$ has self--intersection number 0 for $t\in
  I_0$. Moreover, all maps $\tilde v_t$ are $J$--holomorphic in a
  neighborhood of the punctures and the Sobolev weights have been
  chosen to be maximal. But this contradicts that in this case the
  intersection number is a topological invariant by
  \cite{siefring}. So $v_t$ is embedded for all $t\in I$.

  Next suppose that there are $t_0,t_1\in I$ and
  $z_0,z_1\in\dot\Sigma$ so that $v_{t_0}(z_0)=v_{t_1}(z_1)$ but the
  images of $v_{t_0}$ and $v_{t_1}$ don't coincide. Let $s\in
  (t_0,t_1]$ so that $v_{t_0}$ is disjoint from $v_t$ for all $t\in
  (t_0,s)$ but $v_{t_0}$ and $v_s$ intersect. If $v_{t_0}$ and $v_s$
  have the same image we are done. If not we may again assume, by
  possibly perturbing the almost complex structure in a neighborhood
  of a point on the image of $v_s$ that is disjoint from $v_{t_0}$,
  that all intersections of $v_{t_0}$ and $v_s$ occur in a union of
  disjoint balls. Just as above we may choose complements of the
  coexact forms in the coclosed forms $\tilde \H_{t_0}=\tilde
  \H_{t_0}(j_{t_0})$ and $\tilde \H_s=\tilde \H_s(j_s)$ so that the
  corresponding lifts to the symplectization $\tilde v_{t_0}$ and
  $\tilde v_s$ are $J$--holomorphic in a neighborhood of the punctures
  and the points where $v_0$ and $v_s$ intersect.  Extend these
  complements to a smooth family $\tilde \H_t=\tilde \H_t(j_t)$ for
  all $t\in I$. It follows from \cite{siefring} that the intersection
  number of maps in this family is a topological invariant since the
  weights have been chosen to be maximal. For $t\in (t_0,s)$ the
  intersection number of $\tilde v_t$ with $\tilde v_{t_0}$ is zero,
  contradiction that $\tilde v_{t_0}$ intersects $\tilde v_s$
  positively, after possibly adding a constant function to the
  $\setR$--factor of the map.

  In conjunction we conclude that all maps $v_t$ are embedded and
  disjoint. 
\end{proof}
The Lemma explicitly excludes the cases that the complex structure
becomes degenerate, or that maps in the family are multiply
covered. The Lemma follows more easily from recent work of
R. Siefring (Theorem 2.10 of \cite{siefring_intersection}).

We need the following standard definition. 
\begin{definition}
  An $\H$--holomorphic map $v:\dot\Sigma\to Z$ asymptotic to a
  collection of non--degenerate closed characteristics $B$ is called
  {\em a surface of section} if $v$ is embedded and every
  characteristic flow line in $Z$ is either an asymptotic orbit for
  $v$ or intersects the image of $v$ in forward and backward time.
\end{definition}

\begin{lemma}\label{lem:embedded_periods}
  Let for $t\in I=(a,b)\subset\setR$, let $v_t:\dot\Sigma\to Z$ be a
  smooth family of nicely embedded $\H$--holomorphic maps so that for
  some $c\in I$ $v_c$ is a surface of section. Then all $v_t$, $t\in
  I$ are surfaces of section and the periods of the family $v_t$ are
  uniformly bounded.
\end{lemma}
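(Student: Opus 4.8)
The plan is to show that the property of being a surface of section is both open and closed in the family $\{v_t\}_{t\in I}$, and that this forces a uniform period bound. First I would establish openness: suppose $v_{t_0}$ is a surface of section. By Theorem \ref{thm:local_foliation} the nearby maps $v_t$, together with $v_{t_0}$, locally foliate a tubular neighborhood of the image of $v_{t_0}$ in $Z\setminus B_{t_0}$, and the ``translation'' is by a nowhere-zero function $f$, so the foliation is transverse to the characteristic vector field $R$ away from $B$. A point $q\in Z$ not on an asymptotic orbit lies on a characteristic flow line that, since $v_{t_0}$ is a surface of section, crosses the image of $v_{t_0}$ in forward and backward time; these crossings are transverse (again because $f\neq 0$, equivalently $v^\ast\alpha\circ j$ is not identically zero there in the appropriate sense, so $R$ is nowhere tangent to the embedded surface), hence they persist for $v_t$ with $t$ close to $t_0$. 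A compactness argument on the set of flow lines — parametrize the relevant piece of $Z$ away from a neighborhood of $B$ and use transversality plus the implicit function theorem — upgrades this to: all $v_t$ for $t$ near $t_0$ are surfaces of section.

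For closedness, let $s\in\partial I_0\cap I$ where $I_0$ is the (open, nonempty) maximal subinterval on which the $v_t$ are surfaces of section, and I must show $v_s$ is a surface of section. By Lemma \ref{lem:first_intersection} all $v_t$, $t\in I$, are nicely embedded and mutually disjoint, so the images $\{\,\mathrm{im}\,v_t : t\in I_0\,\}$ sweep out an open subset $W\subset Z$; combined with the asymptotic orbits $B$, the flow lines through $W$ all hit every $v_t$ with $t\in I_0$. I would argue that the ``limit'' surface $v_s$ continues to be hit by every nearby flow line: the transversality of $R$ to each $v_t$ is uniform in $t$ on the radial compactification (by the asymptotic analysis of Lemma \ref{lem:lin_op3} the approach is controlled by the eigenvector $\hat e$, so $R$ is uniformly transverse near the punctures too), so an intersection point of a flow line with $v_t$ for $t\to s$ has a limit which is an intersection point with $v_s$. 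The remaining issue is to rule out that $W$ fails to be ``large enough'' — but since $I_0$ is maximal and by openness cannot be extended, $s$ must be an endpoint of $I$, or else $v_s$ is again a surface of section and openness around $s$ contradicts maximality. The main obstacle here is ensuring that no flow line ``escapes'' in the limit, i.e. that a flow line through a point $q$ near $\mathrm{im}\,v_s$ still returns to $\mathrm{im}\,v_s$ in both time directions; this is where one genuinely uses that the family foliates and that the foliation is by surfaces of section, via a continuity/degree argument for the first-return map to the local foliation near $v_s$.

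Finally, given that all $v_t$ are surfaces of section, I would bound the periods. Fix any free homotopy class $[\gamma]$ of a simple closed loop in $\dot\Sigma$ and a $1$--cylinder Strebel differential $\phi\in\Phi(j_t,[\gamma])$; its closed leaves $\sigma_s(\phi)$ sweep out the cylinder, and
\begin{eqnarray*}
\int_{\sigma_s(\phi)}v_t^\ast\alpha
\end{eqnarray*}
measures (signed) intersection of the loop $v_t\circ\sigma_s$ with the surface of section, or is controlled by the $\omega$-- and $\alpha$--energies together with the geometry of $Z$. Since $v_t$ is a surface of section, the characteristic flow on $Z$ has a well-defined (topological) return behavior, and the harmonic part of $v_t^\ast\alpha$ — which, as noted after Theorem \ref{thm:compactness}, essentially controls the period integrals — is pinned down by the cohomology class $[v_t^\ast\alpha]\in H^1(\dot\Sigma;\setR)$, which varies continuously and lies in a fixed affine subspace determined by the fixed asymptotic orbits $B$ (their periods $T_i$ are fixed since $B$ is fixed along the family). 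Because $I$ is an interval and all data vary continuously with $t$, and the relevant constraints are either topological (intersection numbers, forced by the surface-of-section property) or fixed (the periods $T_i$), the quantity $P_{[\gamma]}(v_t)$ is locally bounded in $t$; a connectedness argument over $I$ then gives a uniform bound $C=C([\gamma])$, which is exactly the ``bounded periods'' hypothesis of Theorem \ref{thm:compactness}. The hard part is the closedness step — showing the surface-of-section property survives the limit $t\to s$ — since openness and the period bound are comparatively soft once one has Lemma \ref{lem:first_intersection} and Theorem \ref{thm:local_foliation} in hand.
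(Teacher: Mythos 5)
The main gap is in the period-bound step, which is the real content of the lemma. You assert that $P_{[\gamma]}(v_t)$ is locally bounded in $t$ and that ``a connectedness argument over $I$ then gives a uniform bound.'' This is not valid: $I=(a,b)$ is an \emph{open} interval, so a locally bounded (even continuous) function on $I$ need not be globally bounded --- it can perfectly well blow up as $t\to a^{+}$ or $t\to b^{-}$, and that is precisely the possible degeneration the lemma must exclude in order to feed into Theorem \ref{thm:compactness}. Pinning $[v_t^\ast\alpha]$ to the affine subspace of $H^1(\dot\Sigma;\setR)$ determined by the fixed periods of $B$ does not help, since that affine subspace is unbounded and the harmonic part can drift off to infinity within it; nothing in your argument controls this drift near $\del I$.

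The paper's argument is instead a genuine rescaling/compactness argument that exploits the surface-of-section hypothesis quantitatively. It introduces the ``characteristic width'' $\hat T(v_c)=\sup_{z\in Z}\inf\{\,t>0:\phi_t(z)\in\mathrm{image}(v_c)\,\}$, which is finite because $v_c$ is a surface of section, and then argues by contradiction: if periods along a subsequence $v_{t_n}$ blow up, a bubbling analysis (first showing $|\pi_F\,dv_n|\lesssim 1+|\eta_n|$ up to finitely many bubbles with nontrivial $\omega$--energy, then splitting into the cases that $dv_n$ blows up or a neck twist blows up) produces, after rescaling, either a unit-speed characteristic flow line in the limit, or a long flow segment coming from a neck. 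Because $\hat T<\infty$, any such long flow segment must cross $\mathrm{image}(v_c)$, contradicting the disjointness of the family guaranteed by Lemma \ref{lem:first_intersection}. This intersection-theoretic obstruction is what actually forces the uniform bound, and it has no counterpart in your proposal. Without something of this kind --- a mechanism that turns an unbounded period into an illegal intersection --- the bound simply does not follow.

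On the surface-of-section claim, the paper is much briefer than you are: Lemma \ref{lem:positive_solution} identifies the infinitesimal variation as translation along $R$ by a nowhere-zero function that extends over the radial compactification, and translation along the characteristic flow visibly preserves the surface-of-section property, which propagates over the connected interval. Your openness argument is the same in spirit; your closedness step, by contrast, is flagged as ``the hard part'' but not actually carried out --- you would still need to rule out the intersection point of a flow line with $v_t$ escaping to infinity as $t\to s$. In any case, this half of the lemma is not where the difficulty lies; the substantive missing ingredient in your write-up is the characteristic-width contradiction argument for the period bound.
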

\begin{proof}
  By Lemma \ref{lem:positive_solution} the family $f_t$ is obtained
  from $v_c$ by shifting in the characteristic direction, so all $v_t$
  are surfaces of section.
  
  We are left to show that the periods are uniformly bounded. Since $v_c$ is a
  surface of section we can define the ``characteristic width'' $\hat T$ of
  $Z\setminus v_c$ as
  \begin{eqnarray*}
    \hat T=\hat T(v_c)=\sup_{z\in Z}\inf_{t\in (0,\infty)}\left\{t\,\Big|\,
      \phi_{t}(z)\in \mathrm{image}(v_c) \right\} .
  \end{eqnarray*}
  
  Now suppose that the periods of the $v_n=v_{t_n}$ are unbounded for
  some sequence $t_n\in I$. Choose a subsequence of $v_n$ so that the
  corresponding domain complex structures converge in some
  $\overline{\M}_{g,n}$. By the usual bubbling off analysis we may
  assume without loss of generality that there exists a constant $C>0$
  so that
  \begin{eqnarray*}
    |\pi_F\,dv_n(z)|\le C(1+|v_n^\ast\alpha(z)|)\le C^2(1+|\eta_n(z)|),\qquad
    \forall\,z\in\dot\Sigma.
  \end{eqnarray*}
  If the first inequality were not true there would be a sequence of
  points $p_n$ and a subsequence with
  $|\pi_F\,dv_n(p_n)|>n(1+|v_n^\ast\alpha(p_n)|)$, which, after
  rescaling, yields a bubble with non--trivial $\omega$--energy, which
  can only happen finitely many times.  If the first inequality holds
  true, but the second inequality does not, so $|\eta_n(p_n)|$ grows
  slower than the coexact part $|da_n(p_n)|$ and by the same argument
  we obtain a bubble map with non--trivial $\omega$--energy, which
  again can only happen finitely many times.

  We split the remaining argument into two cases. Either $dv_n$, and
  thus $\eta_n$, becomes unbounded in the cylindrical metric or
  $v_{n}$ remains bounded but the twist of some neck becomes
  unbounded. In the former case we have that $\eta_n(z)$ becomes
  unbounded in some open set $U\subset\dot\Sigma$ by Harnack's
  inequality. Then we may assume that there exists a sequence of
  points $p_n\in U$ so that $v_{n}^\ast\alpha(p_n)$ grows faster than
  $\pi_F\,dv_{t_n}(p_n)$, otherwise the $\omega$--energy would become
  unbounded. Extracting a convergent subsequence of maps centered at
  $p_n$ and rescaled so that $v_{t_n}^\ast\alpha$ has norm 1 we see
  that this subsequence converges uniformly on compact subsets of
  $\setC$ to a characteristic flow line of unit speed. In particular,
  this subsequence must intersect $v_c$ since $\hat T$ is finite.  But
  this is impossible by Lemma \ref{lem:first_intersection}. Similarly,
  if $dv_{n}$ remains uniformly bounded, but the twist becomes
  unbounded, we may extract a subsequence so that the images of
  subsets of the necks $[-R_n,R_n]\times \{1\}$ converge uniformly to
  a characteristic flow line of unbounded length, again forcing an
  intersection with $v_c$, contradicting Lemma
  \ref{lem:first_intersection}.
\end{proof}

\begin{theorem}\label{thm:moduli_space} 
  Suppose all closed characteristics of $(Z,\alpha,\omega)$ are
  non--degenerate and let $v:\dot\Sigma\to Z$ be an $\H$--holomorphic
  nicely embedded surface of section asymptotic to a collection of
  closed characteristics $B$.

  Let ${\M}_v^1={\M}_v^1(\alpha,\omega,J)$ be the connected component
  of the moduli space of embedded $\H$--holomorphic maps with one
  marked point containing $v$.

  Then ${\M}_v^1$ has a natural compactification
  $\overline{{\M}_v^1}$. If $\overline{{\M}_v^1}$ does not have
  boundary, then the evaluation map gives a diffeomorphism
  $ev:{\M}_v^1\to Z\setminus B$.
\end{theorem}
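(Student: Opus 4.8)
The plan is to establish the two assertions separately. For the existence of $\overline{\M_v^1}$ I would invoke Theorem \ref{thm:compactness} directly. The $\omega$- and $\alpha$-energies of a map in $\M_v^1$ are determined by its relative homology class and by the periods of its asymptotic orbits, so they are constant on the connected component; and the periods $P_{[\gamma]}$ are uniformly bounded over all of $\M_v^1$ by Lemma \ref{lem:embedded_periods}, every map there being a surface of section and the bound in that lemma depending only on the fixed characteristic width $\hat T(v)$. The marked point only contributes a precompact factor. Hence $\M_v^1$ has compact closure in the space of neck--nodal $\H$--holomorphic maps with one marked point, and that closure is $\overline{\M_v^1}$.

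For the evaluation map I would show $ev:\M_v^1\to Z\setminus B$ is a bijective local diffeomorphism. First, $\M_v^1$ is a smooth $3$--manifold: by Lemma \ref{lem:first_intersection} every map in the component is nicely embedded, hence (Lemma \ref{lem:positive_solution}, cf. the proof of Theorem \ref{thm:local_foliation}) cut out transversally, so $\M_v$ is a smooth $1$--manifold, and adjoining one marked point — the asymptotic orbits are simply covered, so the automorphism groups are trivial — yields a smooth $3$--manifold fibred over $\M_v$ with fibre $\dot\Sigma$. Second, $ev$ is a local diffeomorphism: by Theorem \ref{thm:local_foliation} the $1$--parameter family through any map $w$ foliates a neighbourhood of its image in $Z\setminus B$, so at a point $(w,z)$ the differential of $ev$ sends the family direction to a vector transverse to the leaf through $w(z)$ and the domain directions onto the tangent plane of that leaf, hence is an isomorphism. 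Third, $ev$ is injective: if $w_0(z_0)=w_1(z_1)$, then by Lemma \ref{lem:first_intersection} the images of $w_0$ and $w_1$ coincide, so $w_1=w_0\circ\psi$ for a reparametrisation $\psi$ (the simply covered asymptotic orbits force the matching of punctures), and since $w_0$ is an embedding $\psi(z_1)=z_0$, so $(w_0,z_0)$ and $(w_1,z_1)$ represent the same point of $\M_v^1$.

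An injective local diffeomorphism is an open embedding, so $W:=ev(\M_v^1)$ is open in $Z\setminus B$; it then suffices to prove $W$ is closed there, for then $W=Z\setminus B$ because the complement of a link in the connected manifold $Z$ is connected, and $ev$ is a diffeomorphism. To see closedness, let $p\in Z\setminus B$ be a limit of points $ev(w_n,z_n)=w_n(z_n)$. By the compactness above a subsequence of $(w_n)$ converges; since $\overline{\M_v^1}$ has no boundary the limit neither breaks nor bubbles nor develops nodes, so it is a smooth embedded $\H$--holomorphic map $w_\infty\in\M_v$, the latter being compact, hence closed among such maps. If $z_n$ left every compact subset of $\dot\Sigma$ it would approach some puncture, and then the uniform exponential approach of the $w_n$ to their asymptotic orbits — a consequence of non--degeneracy together with the convergence in Theorem \ref{thm:compactness} — would force $w_n(z_n)$ into every neighbourhood of $B$, contradicting $p\notin B$. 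Hence $z_n$ sub--converges to some $z_\infty\in\dot\Sigma$, and $p=w_\infty(z_\infty)=ev(w_\infty,z_\infty)\in W$.

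The step I expect to be the main obstacle is this last one: it requires controlling the underlying curve and the marked point simultaneously, using the hypothesis that $\overline{\M_v^1}$ has no boundary to rule out degeneration of the curves and, independently, using the uniform asymptotics near the punctures to keep the marked point from escaping to a puncture while its image stays away from $B$. A subsidiary technical point is promoting the period bound of Lemma \ref{lem:embedded_periods}, phrased along smooth paths, to a bound uniform over the entire connected component.
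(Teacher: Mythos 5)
Your proposal is correct and follows essentially the same strategy as the paper: compactness of $\overline{\M_v^1}$ from Lemma \ref{lem:embedded_periods} together with Theorem \ref{thm:compactness}, injectivity of the evaluation map from Lemma \ref{lem:first_intersection}, and surjectivity via an open-and-closed argument combining the local foliation (Theorem \ref{thm:local_foliation}) with compactness of $\M_v$. You supply a bit more detail than the paper does on the closedness step — in particular ruling out the escape of the marked point to a puncture via the asymptotic approach to $B$ — but the overall argument is the same.
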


\begin{proof}
  ${\M}_v^1$ has a natural compactification $\overline{{\M}_v^1}$ by
  Lemma \ref{lem:embedded_periods} and Theorem \ref{thm:compactness}.
  
  Let $f:\overline{\M_v^1}\to\overline{\M_v}$ be the forgetful
  map. Then $\overline{\M_v}$ is a closed, connected 1--dimensional
  manifold.

  Assume it does not have boundary, then $\M_v^1=\overline{\M_v^1}$.
  Let $u\in \M_v$. By Theorem \ref{thm:local_foliation} $u$ is part of
  a unique local 1--parameter family $u_t$ of maps in $\M_v$ with
  $u=u_0$ that are all embedded and that locally foliates $Z\setminus
  B$. By Lemma \ref{lem:first_intersection} no two maps in $\M_v$ can
  intersect each other, so the image of the evaluation map is
  injective. Since $v$ is a surface of section, every characteristic
  flow line intersects $v$ in finite time. Then the fact that $\M_v$
  is compact, together with the strong version of the implicit
  function theorem Theorem \ref{thm:local_foliation}, implies that the
  image of the evaluation map from $\M_v^1$ to $Z\setminus B$ is
  surjective.
\end{proof}
This theorem suggests the existence of a finite energy foliation in
the presence of a nicely embedded surface of section $v$. The proof of
this requires a gluing theorem to continue the moduli space
$\overline{\M_v}$ past the boundary. This requires a slightly
different pre--gluing spaces compared to the $J$--holomorphic case,
due to the different behavior of neck--maps and is work in progress.

\section{$\H$--Holomorphic Open Book Decompositions}
\label{sec:h-holomorphic-open}
As an application we now prove the existence of $\H$--holomorphic open
book decomposition supported by any given contact structure. Armed
with the results of Section \ref{sec:local-fredh-theory} this is a
straightforward extension of the work of Wendl in
\cite{wendl_open_book}.

\begin{theorem}\label{thm:open_book}
  Suppose $(Z,F)$ is a closed 3--manifold with positive, co-oriented
  contact structure $F$ and $\pi:Z\setminus B\to S^1$ is an open book
  decomposition that supports $F$. Then, after an isotopy of $\pi$,
  there exists a nondegenerate contact form $\alpha$ with
  $\ker\alpha=F$, a compatible almost complex structure $J$ on $F$ and
  a smooth $S^1$--family of nicely embedded $\H$--holomorphic maps
  parametrizing the pages of $\pi$.
\end{theorem}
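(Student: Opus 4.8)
Following the strategy of Wendl in \cite{wendl_open_book}, I would build the desired contact form and almost complex structure by hand starting from the given open book $\pi:Z\setminus B\to S^1$, arrange the resulting $S^1$--family of pages to be a family of $\H$--holomorphic surfaces of section, and then invoke the local foliation result Theorem \ref{thm:local_foliation} and the compactness result Theorem \ref{thm:moduli_space} to promote the construction to a genuine $\H$--holomorphic open book decomposition. The point is that an $\H$--holomorphic open book is ``softer'' than a $J$--holomorphic one in the symplectization: the pages need not lift to $J$--holomorphic curves, so rather than solving a nonlinear Cauchy--Riemann problem we only need to realize the pages as zeros of the map $v\mapsto(\delbar_J^F v,\,d(v^\ast\alpha\circ j))$ together with the period conditions.

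\medskip
\textbf{Step 1: Normal form near the binding and a Giroux--type contact form.} First I would recall that, by Giroux, after an isotopy of $\pi$ we may choose a contact form $\alpha_0$ with $\ker\alpha_0=F$ that is adapted to the open book: $\alpha_0>0$ on $B$, $d\alpha_0>0$ on the pages, and near $B$ the form has the standard model in coordinates $(\rho,\phi,\theta)$ (with $\theta=\pi$ the page coordinate) so that the Reeb vector field $R_0$ is tangent to $B$ along $B$ and transverse to the pages away from $B$. I would further perturb $\alpha_0$ to a nondegenerate contact form $\alpha$ with the same kernel, keeping the binding orbits as the only short orbits and keeping the pages transverse to $R$; this is where nondegeneracy of $\alpha$ is secured. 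The pages $\Sigma_\theta=\overline{\pi^{-1}(\theta)}$ are then honest embedded surfaces with boundary $B$, and in the interior $R$ is positively transverse.

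\medskip
\textbf{Step 2: Choice of $J$ making each page $\H$--holomorphic.} Next, on $F=\ker\alpha$ I would choose a $d\alpha$--compatible complex structure $J$ so that each page $\Sigma_\theta$, equipped with the complex structure $j_\theta$ induced by $J$ on $F\cap T\Sigma_\theta$ (this plane field is transverse to the Reeb direction, so $\pi_F$ restricts to an isomorphism $T\Sigma_\theta\to F$ over $\Sigma_\theta\setminus B$), becomes $\H$--holomorphic: the inclusion $v_\theta:\dot\Sigma_\theta\hookrightarrow Z$ satisfies $\delbar_J^F v_\theta=0$ by construction, and $v_\theta^\ast\alpha\circ j_\theta$ is a closed $1$--form because the pages can be taken to have $v_\theta^\ast\alpha$ a fixed (up to isotopy) $1$--form whose $\circ j_\theta$--rotation is exact near the binding, giving \eqref{eq:H_L} and the period conditions \eqref{eq:H_per}. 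Near $B$ the standard model makes this completely explicit, matching the asymptotic behavior of a positively transverse approach along each binding orbit; I would check there that the transverse approach is \emph{maximal} ($-\lambda<2\pi$ and $\lambda$ the largest negative eigenvalue) and that the Fredholm index is $1$, i.e. the pages are \emph{nicely embedded} in the sense of the Definition above. This asymptotic bookkeeping at the binding — getting the model form so that the eigenvalue governing the approach is exactly the maximal one — is the step I expect to be the main obstacle, since it has to be done simultaneously for all pages and is what forces the isotopy of $\pi$ and the precise choice of $\alpha$.

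\medskip
\textbf{Step 3: From the $S^1$--family to the moduli space.} Having produced one nicely embedded $\H$--holomorphic page $v=v_{\theta_0}$, which is tautologically a surface of section (every Reeb flow line either lies in $B$ or crosses every page, hence crosses the image of $v$ in forward and backward time, since away from $B$ the flow is transverse to the pages), Theorem \ref{thm:local_foliation} shows $v$ sits in a local $1$--parameter family of nicely embedded $\H$--holomorphic maps that locally foliate $Z\setminus B$; Lemma \ref{lem:first_intersection} shows these maps are mutually disjoint. Let $\M_v^1$ be the connected component of the moduli space with one marked point containing $v$, as in Theorem \ref{thm:moduli_space}. By that theorem $\M_v^1$ has a natural compactification $\overline{\M_v^1}$, and because every curve in the component is again a surface of section the family cannot break (a nodal or multiply-broken limit would produce a lower-dimensional object that still must be hit by every Reeb flow line, contradicting the dimension count / the no-first-intersection property just as in Lemma \ref{lem:embedded_periods}); hence $\overline{\M_v^1}$ has no boundary. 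Theorem \ref{thm:moduli_space} then gives that the evaluation map $ev:\M_v^1\to Z\setminus B$ is a diffeomorphism, and the forgetful map $\M_v^1\to\M_v$ exhibits $\M_v\cong S^1$. Pulling back the binding $B$ and the $S^1$--coordinate on $\M_v$, the resulting $S^1$--family of embedded $\H$--holomorphic maps is exactly an $\H$--holomorphic open book decomposition with binding $B$; since it foliates $Z\setminus B$ by surfaces transverse to $R$ with boundary on $B$, it is isotopic to $\pi$ and supports $F=\ker\alpha$. This completes the proof.
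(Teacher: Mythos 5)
Your overall blueprint (realize one page as a nicely embedded $\H$--holomorphic surface of section and then invoke Theorems \ref{thm:local_foliation} and \ref{thm:moduli_space} to generate the $S^1$--family) matches the second half of the paper's argument, but your Step 2 takes a genuinely different — and, as written, unsupported — route to producing that first page. The paper does \emph{not} try to choose $\alpha$ and $J$ so that the Giroux pages are directly $\H$--holomorphic for a contact form. Instead it starts from Wendl's stable Hamiltonian structure $(\alpha_0,\omega_0,J_0)$ in which $\omega_0\ne d\alpha_0$ and the pages are honestly $J_0$--holomorphic (hence trivially $\H$--holomorphic, since $v^\ast\alpha_0\circ j=-da$ is exact), and then deforms along Wendl's path $(\alpha_t,\omega_t,J_t)$ toward the contact case, carrying the compact $S^1$--family $\M_0$ along by the implicit function theorem and shrinking $\varepsilon$ to keep the maps nicely embedded. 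Giroux's classification is used at the \emph{end}, to identify $\ker\alpha_\varepsilon$ with the given $F$, not at the beginning.

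The gap in your argument is precisely the step you flagged as ``the main obstacle'': you assert, without a construction, that $\alpha$ and $J$ can be chosen so that $v_\theta^\ast\alpha\circ j_\theta$ is closed with vanishing periods at the binding for (at least one of) the tautologically complex pages. That is the content of equations \eqref{eq:H_L} and \eqref{eq:H_per}, and it does not follow from adaptedness of $\alpha$ to the open book or from the local normal form near $B$; closedness of $v^\ast\alpha\circ j$ is a global PDE constraint on the page, not a condition localizable near the binding. If it could be arranged directly for the contact pair $(\alpha,d\alpha,J)$ there would be no need for the stable Hamiltonian interpolation at all, which is the crux of Wendl's (and this paper's) construction. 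A secondary issue: your claim in Step 3 that $\overline{\M_v^1}$ has no boundary is asserted by a heuristic dimension count rather than proved; the paper obtains this instead by noting that for $t\in[0,\varepsilon]$ small, $\M_t$ is a small perturbation of the already compact $S^1$--family $\M_0$, so no degeneration can occur. To repair your argument you would essentially have to re-import the interpolation $(\alpha_t,\omega_t,J_t)$, at which point it collapses to the paper's proof.
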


\begin{proof}
  The proof follows the proof of the planar case given in
  \cite{wendl_open_book}. As explained in the first part of Section 3
  of \cite{wendl_open_book} there exists a stable Hamiltonian
  structure $(F,\alpha_0,\omega_0,J_0)$, and a small isotopy of $\pi$
  so that the pages of the open book decomposition are
  $J_0$--holomorphic. We now wish to deform this stable Hamiltonian
  structure to one arising from a contact form $\alpha$ with contact
  structure $F$ and some compatible almost complex structure $J$, and
  we wish to deform the foliation to an $\H$--holomorphic foliation of
  $(Z\setminus B)$ with respect to the almost complex structure $J$ on
  $F$. Any $J_0$--holomorphic map is automatically $\H$--holomorphic
  (w.r.t. $(\alpha_0,\omega_0,J_0)$) and that all pages are nicely
  embedded. Denote this connected component of the moduli space of
  $\H$--holomorphic maps by $\M_0$ and the space of maps with one
  marked point on the domain by $\M_0^1$.

  Using the implicit function theorem on the compact family of curves,
  for a sufficiently small perturbation of stable Hamiltonian
  structure that leaves a neighborhood of the binding invariant, we
  can find nearby $\H$--holomorphic maps. More precisely, we obtain
  families $\{\M_t\}_{t\in[0,\varepsilon)}$ of moduli spaces of
  $\H$--holomorphic maps, where $\M_t$ consists of $\H$--holomorphic
  maps with respect the the stable Hamiltonian structure
  $(\alpha_t,\omega_t,J_t)$, $t\in[0,\varepsilon]$ from
  \cite{wendl_open_book} in the connected component containing $\M_0$.
  
  By possibly shrinking $\varepsilon$ we may assume that all elements
  of $\M_t$ are nicely embedded, and thus locally foliating by
  Theorem \ref{thm:local_foliation}. Thus $\M_t$ foliates
  $Z\setminus B$ for $\lambda\in[0,\varepsilon]$ by Theorem
  \ref{thm:moduli_space}. 
   
  By Theorem \ref{thm:moduli_space} we see that each $\M_t^1$
  is diffeomorphic to $Z\setminus B$ via the evaluation map and that
  $\M_t$ is diffeomorphic to $S^1$. The characteristic vector
  field $R_t$ is transverse to each map in $\M^1_t$ since the
  maps are embedded. Thus the forgetful map
  $\pi_t^1:\M_t^1\to \M_t\approx S^1$ give the desired
  isotopy of open book projections.

  By Giroux's classification, the contact structures $F_t=\ker\alpha_t$
  are contactomorphic for any $0<t\le\varepsilon$ since they are
  supported by the same open book decomposition. In particular, all
  are contactomorphic to $F$.
\end{proof}

%\bibliography{../bib/math2}
\providecommand{\bysame}{\leavevmode\hbox to3em{\hrulefill}\thinspace}
\providecommand{\MR}{\relax\ifhmode\unskip\space\fi MR }
% \MRhref is called by the amsart/book/proc definition of \MR.
\providecommand{\MRhref}[2]{%
  \href{http://www.ams.org/mathscinet-getitem?mr=#1}{#2}
}
\providecommand{\href}[2]{#2}

\end{document}